\documentclass{article}
\oddsidemargin=.15in
\evensidemargin=.15in
\textwidth=6in
\topmargin=-.7in
\textheight=9in
\parindent=0.15in
\usepackage{fancyhdr}
\usepackage{amsmath, amsfonts, amssymb, amsthm}
\usepackage{graphicx}
\usepackage{mathtools}
\usepackage[shortlabels]{enumitem}
\usepackage{color}
\usepackage{eufrak}
\usepackage{verbatim}
\usepackage{sectsty}
\usepackage{amsopn}
\usepackage{setspace}
\usepackage{comment}
\usepackage{tasks}
\usepackage{bbm}
\usepackage{relsize}
\usepackage[english]{babel}
\usepackage{ mathrsfs }
\usepackage{authblk}

\newtheorem{thm}{Theorem}[section]
\newtheorem{definition}[thm]{Definition}
\numberwithin{equation}{section}
\numberwithin{figure}{section}
\newtheorem{corollary}[thm]{Corollary}
\newtheorem{lemma}[thm]{Lemma}

\begin{document}

\title{Maximizers for Strichartz Inequalities on the Torus}

\author[1]{Oreoluwa Adekoya}
\author[2]{John P. Albert}
\affil[1]{Department of Applied Mathematics, University of Washington}
\affil[2]{Department of Mathematics, University of Oklahoma}

\date{}

\maketitle

\begin{abstract} We study the existence of maximizers for a one-parameter family of Strichartz inequalities on the torus. In general maximizing sequences can fail to be precompact in $L^2(\mathbb T)$, and maximizers can fail to exist. We provide a sufficient condition for precompactness of maximizing sequences (after translation in Fourier space), and verify the existence of maximizers for a range of values of the parameter.  Maximizers for the Strichartz inequalities correspond to stable, periodic (in space and time) solutions of a model equation for optical pulses in a dispersion-managed fiber.
\end{abstract}
\section{Introduction}

In this paper we study the existence of maximizers for a one-parameter family of Strichartz inequalities on the torus $\mathbb T = \mathbb R/(2\pi \mathbb Z)$.   These inequalities give bounds on the space-time norms of $T_tu(x)$, where $T_t$ denotes the unitary semigroup defined on $L^2(\mathbb T)$ by the linear Schr\" odinger equation.   That is, for each function $u \in L^2([0,2\pi])$, $T_t f(x)$ is defined to equal $v(x,t)$, where $v$ is the solution of the linear Schr\"odinger equation $v_t - iv_{xx}=0$ on $[0,2\pi]$ with periodic boundary conditions and with initial condition $v(x,0)=u(x)$.   The Strichartz inequalities in question state that for each $B>0$ there exists a constant $C >0$ such that for all $u \in L^2(\mathbb T)$,
\begin{equation}
\label{Strichinequal}
\left(\int_0^B \int_{\mathbb T} |T_tu(x)|^4\ dx\ dt\right)^{1/4} \le C\left(\int_{\mathbb T}|u(x)|^2\ dx\right)^{1/2}.
\end{equation} 
 
 Inequality \eqref{Strichinequal} was proved by  Bourgain in \cite{bourgain} for $B=2\pi$; the result for general $B$ follows immediately from the result for $B=2\pi$ via H\" older's inequality and the fact that $T_t$ is periodic in $t$ with period $2\pi$.
 
 Let $C_B$ denote the best constant in \eqref{Strichinequal}, 
 $$
 C_B = \inf \{C > 0:  \ \text{inequality \eqref{Strichinequal} holds for all $u \in L^2(\mathbb T)$}\}.
 $$
 We say that $u_0 \in L^2(\mathbb T)$ is a maximizer for \eqref{Strichinequal} if 
 $$
 \left(\int_0^B \int_{\mathbb T} |T_tu_0(x)|^4\ dx\ dt\right)^{1/4} = C_B\left(\int_{\mathbb T}|u_0(x)|^2\ dx\right)^{1/2}.
 $$
 It is not hard to see (cf.\ Corollary \ref{nonexistence}  below)  that if $B$ is of the form $B = N \pi$ with $N \in \mathbb N$, then there are no maximizers for \eqref{Strichinequal} in $L^2(\mathbb T)$.   Here, we prove as our main result a sufficient condition for the existence of maximizers for \eqref{Strichinequal}.  As a consequence we obtain that maximizers for \eqref{Strichinequal} do exist at least for $B$ in the range $0 < B < B_4$, where  $B_4 \approx 2.60$.   It remains open whether there is some $B$ in $(0,\pi)$ for which maximizers do not exist.

As explained in \cite{kunze}, an important mathematical feature of the problem of finding maximizers for the analogue of \eqref{Strichinequal} in $L^2(\mathbb R)$ is a loss of compactness:  in general,  it need not be true that maximizing sequences have strongly convergent subsequences in $L^2(\mathbb R)$,  even when maximizers do exist.   In \cite{kunze}, this difficulty is overcome by showing that if  $\{u_j\}$ is a maximizing sequence, then for some subsequence $\{u_{j_k}\}$, $\{e^{i\theta_k}u_{j_k}(x -x_k)\}$ can be made to converge for suitable choices of the sequences $\{\theta_k\}$ and $\{x_k\}$.   In other words, maximizing sequences do always have subsequences which, after being suitably translated in physical space and in Fourier space, converge strongly in $L^2(\mathbb R)$.

In the periodic case, there is a similar difficulty due to loss of compactness of maximizing sequences $\{u_j\}$.   We show below  (see Theorem \ref{mainthm}) that if $C_B > B/\pi$, then every 
maximizing sequence $\{u_j\}$ for \eqref{Strichinequal} must have subsequences $\{u_{j_k}\}$ such that $\{e^{i\theta_k x}u_{j_k}(x)\}$ converges strongly in $L^2(\mathbb T)$, for suitably chosen $\{\theta_k\}$.  Our proof follows the framework of that given for the nonperiodic case in \cite{kunze}: first, concentration-compactness arguments are used to show that maximizing sequences must have subsequences which, after translation, are simultaneously tight in physical space and in Fourier space, after which a decomposition of the translated subsequence into high- and low-frequency parts is used to deduce strong convergence in $L^2(\mathbb R)$.  However,  the application of this technique to the problem on the torus runs into a difficulty which is not encountered for the problem on the line: in the periodic case, for certain values of $B$ (including $B=2\pi$) maximizing sequences can vanish or exhibit splitting in Fourier space, while this cannot happen for maximizing sequences on the line.  That the difficulty is essential, and not just an artifact of the method of proof, is shown by the fact that, as mentioned above, maximizers do not exist in the periodic case for certain values of $B$. This seems to be an instance of the general principle that the effects of dispersion in wave propagation are more subtle and delicate in the periodic case than on the line.

The sufficient condition for maximizers presented in Theorem \ref{mainthm} can be verified by finding an appropriate test function:  it suffices to find $w \in L^2(\mathbb T)$ such that the ratio of $\|T_t w(x)\|_{L^4(\mathbb T\times [0,B])}$ to $\|w\|_{L^2(\mathbb T)}$  is greater than $(B/\pi)^{1/4}$.   We restate this condition on the test function $w$ in a more convenient form in Corollary \ref{ABcriterion}  below,  and by finding suitable test functions which satisfy this condition, we prove the existence result mentioned above. 

Maximizers for the Strichartz inequality also correspond to ground-state solutions of a equation, sometimes known as the dispersion-managed nonlinear Schr\"odinger equation (DMNLS), which models nonlinear, long-wavelength light pulses in a dispersion-managed optical fiber.   In the non-periodic case, where pulses are defined on the entire real line and decay as $|x| \to \infty$ (note that $x$ is actually a time variable in this model), the DMNLS equation was derived in \cite{gabitov} (see also \cite{ab}), and the existence of  ground-state solutions was proved by variational methods in \cite{zharnitsky2} for the case of positive average dispersion, and in \cite{kunze} for the case of zero average dispersion.

The maximizers whose existence is proved in the present paper, by contrast, correspond to solutions of an equation which models  pulses in a dispersion-managed fiber which are periodic in both $x$ and $t$.  This periodic DMNLS equation was derived in  \cite{adekoya}, where well-posedness results for the initial-value problem are proved for the case of positive and zero average dispersion, and results on the existence and stability of periodic ground-state solutions were proved in both cases.   

  In the case of zero average dispersion, the periodic DMNLS equation, for complex-valued functions $u(x,t)$ which are periodic with period $L$ in the $x$ variable, can be written in Hamiltonian form as 
\begin{equation}
u_t = -i \nabla H_L(u).
\label{DMNLS}
\end{equation}
  Here the Hamiltonian functional $H_L:L^2(\mathbb T) \to \mathbb R$ is given by
$$
H_L(u)=-\frac{2\pi}{L}\int_0^L\int_0^1 |T^L_t u(x)|^4\ dt \ dx,
$$  
and $\nabla H_L$ denotes the gradient of $H_L$, given by
$$
\nabla H_L(u)=-\frac{8\pi}{L}\int_0^L\int_0^1 T^L_{-t}\left( |T^L_t u(x)|^2 T^L_t u(x)\right)\ dt \ dx.
$$ 
The operator $T^L_t$ appearing in the integrand is the solution operator for the linear Schr\"odinger equation with periodic boundary conditions on $0\le x \le L$.   That is, $T^L_t(u)(x)=v(x,t)$, where $v(x,t)$ is periodic with period $L$ in $x$ and satisfies the equation $iv_t + v_{xx}=0$, with initial condition $v(x,0)=u(x)$. 
(The gradient here is defined with respect to the real-valued inner product $\langle u,v\rangle$ defined on $L^2(\mathbb T)$ by
$$
\langle u, v\rangle = \Re \int_{\mathbb T} u(x)\overline v(x)\ dx.
$$
That is, we have 
$$
\lim_{\epsilon \to 0} \frac{H_L(u+\epsilon v)-H_L(u)}{\epsilon}= \langle \nabla H_L(u),v\rangle
$$
for all $v \in L^2(\mathbb T)$.)

As an immediate consequence of our results on existence of maximizers for \eqref{Strichinequal}, we obtain results on the existence and stability of sets of ground-state solutions to the periodic DMNLS equation \eqref{DMNLS}, for a range of values of the period $L$.  Ground-state solutions can be characterized as solutions of the form $u(x,t)=e^{i\omega t}\phi(x)$, where $\omega \in \mathbb R$ and the profile function $\phi(x)$ minimizes $H_L(u)$ among all functions in $L^2(0,L)$ with fixed $L^2$ norm $\lambda$.  For each fixed value of $\lambda > 0$, the stability of the set $S_{L,\lambda}$ of corresponding ground-state profile functions follows from a standard argument, once we have shown that every minimizing sequence for the associated variational problem converges strongly to $S_{L,\lambda}$ in $L^2$ norm. 

The organization of this paper is as follows.   In Section \ref{sec:notation}, we establish notation and state our main results.   Section \ref{sec:prelim} contains some preliminary lemmas.  The proof of Theorem \ref{mainthm}, on the sufficiency of the condition $C_B>B/\pi$ for the existence of maximizers, is given in Section \ref{sec:proofmainthm}.   This sufficient condition is verified for a range of values of $B$ in Section \ref{sec:existence}.  The final Section \ref{sec:stability} discusses the implications for existence and stability of non-empty sets of ground-state solutions of the periodic DMNLS equation \eqref{DMNLS}.

 The results of this paper are taken from the first author's doctoral dissertation at the University of Oklahoma \cite{adekoya}.

\section{Notation and statement of main results.} 
\label{sec:notation}

If $E$ is a measurable subset of $\mathbb R$ and $1 \le p < \infty$, we define $L^p(E)$ to be the space of Lebesgue measurable complex-valued functions $u$ on $E$ such that 
$\|u\|_{L^p(E)}=\left(\int_E |u|^p\ dx\right)^{1/p}$ is finite.  We denote by $L^2(\mathbb T)$ the space of Lebesgue measurable, square-integrable, $2\pi$-periodic functions on  $\mathbb R$.   We can identify $L^2(\mathbb T)$ with $L^2([0,2\pi])$.  We will often denote the norm of $u$ in $L^2(\mathbb T)$ simply by $\|u\|_{L^2}$.  For $B>0$, we define $L^p_{t,x}([0,B]\times \mathbb T)$ to be the space of all functions $f(t,x)$ defined for $(t,x) \in [0,B]\times \mathbb T$ such that the norm 
$\|f\|_{L^p_{t,x}([0,B]\times \mathbb T)}=\left(\int_0^B\int_{\mathbb T}|f(t,x)|^p\ dx\ dt\right)^{1/p}$ is finite.

For $1 \le p < \infty$, we define $\ell^p(\mathbb Z)$ to be the space of sequences of complex numbers $\{a(n)\}_{n \in \mathbb Z}$ such that
$\|a\|_{\ell^p} = \left(\sum_{n \in \mathbb Z}|a(n)|^p\right)^{1/p}$ is finite.  We define $\ell^\infty(\mathbb Z)$ to be the space of all sequences $\{a(n)\}_{n \in \mathbb Z}$ such that
$\|a\|_{\ell^\infty}=\sup_{n \in \mathbb Z}|a(n)|$ is finite.

For $u \in L^2(\mathbb T)$, we define the Fourier transform of $u$ to be the sequence $\mathcal{F}u$ in $l^2(\mathbb Z)$ given by
$$\mathcal{F} u[n]= \frac{1}{2\pi}\int_{\mathbb{T}} e^{-inx} u(x)\ dx$$
for $n \in \mathbb Z$. We also denote $\mathcal{F} u[n]$ by $\hat f(n)$. The inversion formula for the Fourier transform is given by 
$$ u(x)= \sum_{n \in \mathbb{Z}} \hat u(n) \ e^{inx}.$$
The correspondence $u \to \hat u$ defines a one-to-one map from $L^2(\mathbb T)$ onto $\ell^2(\mathbb Z)$; and with this definition of the Fourier transform, Parseval's theorem asserts that for any $u,v \in L^2(\mathbb T)$, one has
$$
\int_{\mathbb T}u(x)\bar v(x)\ dx = 2\pi\sum_{n \in \mathbb Z}\hat u(n)\bar {\hat v}(n),
$$
and in particular
$$
\|u \|_{L^2}=\sqrt {2\pi}\|\hat u\|_{\ell^2}.
$$
Also, the Fourier transform of the product $uv$ is given by a convolution:
$$
\mathcal{F}(uv)[n]=(\hat u * \hat v) [n] = \sum_{k \in \mathbb Z} \hat u(k)\hat v(n-k).
$$

In Section \ref{sec:stability}, we will have occasion to mention the action of the Fourier transform on functions of period $L$.   Define $L^2_{\rm per}(0,L)$ to be the set of all measurable functions on $\mathbb R$ which are periodic of period $L$ and which are square integrable on $0 \le x \le L$.   For $u \in L^2_{\rm per}(0,L)$, we define the Fourier transform $\mathcal F_L u \in \ell^2(\mathbb Z)$ by 
$$
\mathcal{F}_Lu[n]= \frac{1}{L}\int_0^L e^{-i(2\pi n/L)x} u(x)\ dx,
$$
and we have the have the Fourier inversion formula
$$
u(x)=\sum_{n \in \mathbb Z} \mathcal F_L u[n]\ e^{i(2\pi n/L)x}.
$$

We define the Sobolev space $H^1=H^1(\mathbb T)$ to be the space of all functions $u \in L^2(\mathbb T)$ such that the $H^1$ norm
$$
\|u\|_{H^1}=\left(\sum_{n \in \mathbb Z} |n|^2|\hat u(n)|^2\right)^{1/2}
$$
is finite.

We denote by $\mathcal D$ the set of all functions $u \in L^2(\mathbb T)$ such that $\hat u(n)=0$ for all but finitely many $n \in \mathbb Z$.   In particular, functions in $\mathcal D$ are infinitely smooth.

For functions $g(t,x) \in L^2_{t,x}([0,2\pi]\times \mathbb T)$,  the space-time Fourier transform of $g$ is the sequence $\mathcal F_{t,x}g \in \mathbb Z \times \mathbb Z$ defined by
$$
\mathcal F_{t,x}g[m,n]=\frac{1}{(2\pi)^2}\int_0^{2\pi}\int_{\mathbb T}e^{-imt}e^{-int}g(t,x)\ dt\ dx.
$$
The correspondence $g \to \mathcal F_{t,x}g$ defines a one-to-one map from $L^2([0,2\pi]\times \mathbb T)$ onto the space $\ell^2(\mathbb Z \times \mathbb Z)$ of square-integrable sequences $b[m,n]$, and Parseval's theorem asserts that for $g_1, g_2 \in L^2_{t,x}([0,2\pi]\times \mathbb T)$, one has
$$
\int_{\mathbb T}\int_0^{2\pi} g_1 \overline {g_2}\ dt\ dx = (2 \pi)^2 \sum_{m \in \mathbb Z} \sum_{n \in \mathbb Z} \mathcal F_{t,x} g_1 \overline{\mathcal F_{t,x}g_2}.
$$

For $t \in \mathbb R$, define $T_t:L^2(\mathbb T) \to L^2(\mathbb T)$ as a Fourier multiplier operator by setting, for $u \in L^2(\mathbb T)$ and $n \in \mathbb Z$,
\begin{equation}
\label{defT}
\mathcal{F}(T_t u)[n]=e^{-in^2 t}\mathcal Fu[n].
\end{equation}
For a given $u \in L^2(\mathbb T)$, $T_tu(x)$ is thus defined as a measurable function of $x$ and $t$.  Since $\int_{\mathbb T}|T_t u(x)|^2\ dx = \|\mathcal{F}(T_t u)\|_{\ell^2}^2=\|\mathcal Fu[n]\|_{\ell^2}^2=\|u\|_{L^2(\mathbb T)}^2$ for each $t \in \mathbb R$, we have $ \int_0^B\int_{\mathbb T}|T_tu(x)|^2\ dt\ dx < \infty$ for every $B>0$.  Therefore $T_tu \in L^2_{t,x}([0,B]\times \mathbb T)$.  In particular, taking $B=2\pi$, we have that $\mathcal F_{t,x}(T_tu)$ is well-defined in $\ell^2(\mathbb Z \times \mathbb Z)$ and is given by
$$
\mathcal F_{t,x}(T_tu)[m,n]=\begin{cases} e^{-in^2t}\hat u(n) \quad &\text{if $m = n^2$}\\
                                                               0 \quad &\text{if $m \ne n^2$}.\end{cases}
$$

Fix $B>0$, and for $u \in L^2(\mathbb T)$,  define
\begin{equation}
W_B(u)=\int_0^B\int_{\mathbb{T}}  |T_t u(x)|^4 \ dx\ dt . 
\end{equation}
We consider the variational problem of maximizing $W_B(u)$ over $L^2(\mathbb T)$, subject to the constraint $\|u\|_{L^2}^2= \lambda$, where  $\lambda >0$ is fixed. 
Define
\begin {equation}\label{jlambda}
J_{B,\lambda}= \sup\ \{W_B(u): u\in L^2(\mathbb T) \text{ and } \|u\|_{L^2}^2= \lambda\}.
\end{equation}

We say that a sequence $\{u_j\}_{j \in \mathbb N}$ in $L^2(\mathbb T)$ is a maximizing sequence for $J_{B,\lambda}$ if $\|u_j\|_{L^2}^2=\lambda$ for all $j \in \mathbb N$ and $\lim_{j \to \infty}W_B(u_j)=J_{B,\lambda}$;
and we say that $u_0 \in L^2(\mathbb T)$ is a maximizer for $J_{B,\lambda}$ if $\|u_0\|_{L^2}^2=\lambda$ and $W_B(u_0)=J_{B,\lambda}$.

Observe that since $W_B(\lambda u)=\lambda^4 u$ for all $\lambda \in \mathbb R$, it follows that 
\begin{equation}
J_{B,\lambda}=\lambda^2 J_{B,1}
\end{equation} 
for all $\lambda >0$.   In other words, if we define $C_B = J_{B,1}$, then for all $u \in L^2(\mathbb T)$ we have
\begin{equation}
W_B(u) \le C_B\|u\|_{L^2}^4,
\label{Strich}
\end{equation}
which is equivalent to the Strichartz inequality \eqref{Strichinequal} with best constant $C_B$.  That $C_B$ is indeed finite is shown below in Lemma \ref{Tsu}.
  It is clear that the existence of a maximizing function for $C_B=J_{B,1}$ is equivalent to the existence of a maximizing function for $J_{B,\lambda}$  for every $\lambda > 0$.
  
  The following theorem establishes a sharp condition for the precompactness of maximimizng sequences for $J_{B,1}$.

\begin{thm}  
\item{(i)} If $B>0$ and 
\begin{equation}
J_{B,1} > \frac{B}{\pi},
\label{maincond}
\end{equation}
then every maximizing sequence for $J_{B,1}$ has a subsequence which, after translations in Fourier space, converges in $L^2(\mathbb T)$ to a maximizer for $J_{B,1}$. That is, if $\{u_j\}$ is a sequence such that $\|u_j\|_{L^2}=1$ for all $j \in \mathbb N$ and $\lim_{j \to \infty}W_B(u_j)= J_{B,1}$, 
then there exists a subsequence $\{u_{j_k}\}$ and a sequence of real numbers $\{\theta_k\}$ such that $\{e^{i\theta_k x}u_{j_k}(x)\}$ converges strongly in $L^2(\mathbb T)$. 

In particular, there does exist a maximizer for $J_{B,1}$: that is, there exists  $u_0 \in  L^2(\mathbb T)$ such that $\|u_0\|_{L^2}=1$ and $W_B(u_0)=J_{B,1}$.   
\bigskip
\item{(ii)} For all $B>0$, $$J_{B,1} \ge \frac{B}{\pi}.$$
\bigskip
\item{(iii)} If $B>0$ and 
\begin{equation}
J_{B,1} = \frac{B}{\pi},
\end{equation}
then there exist maximizing sequences for $J_{B,1}$ which do not have any subsequences that can be made to converge by translating the terms in Fourier space.
\label{mainthm}
\end{thm}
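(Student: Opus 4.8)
Writing $a=\hat u$ and expanding on the Fourier side,
\[
W_B(u)=2\pi\sum_{n_1-n_2+n_3-n_4=0}a(n_1)\overline{a(n_2)}\,a(n_3)\overline{a(n_4)}\int_0^B e^{-i(n_1^2-n_2^2+n_3^2-n_4^2)t}\,dt .
\]
The \emph{resonant} terms, those also satisfying $n_1^2-n_2^2+n_3^2-n_4^2=0$ (which together with $n_1+n_3=n_2+n_4$ forces $\{n_1,n_3\}=\{n_2,n_4\}$), contribute exactly $2\pi B\,(2\|a\|_{\ell^2}^4-\|a\|_{\ell^4}^4)=\tfrac{B}{\pi}\|u\|_{L^2}^4-2\pi B\|a\|_{\ell^4}^4$; the rest is a non-resonant remainder. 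Choosing $u_N$ with $\hat u_N$ equidistributed of modulus $(2\pi N)^{-1/2}$ over a finite Sidon set $\Lambda_N$ of cardinality $N$ (e.g.\ $\Lambda_N=\{2^j:0\le j<N\}$), so $\|u_N\|_{L^2}=1$, the Sidon property makes $n_1+n_3=n_2+n_4$ already imply $\{n_1,n_3\}=\{n_2,n_4\}$, so the non-resonant remainder vanishes and $W_B(u_N)=\tfrac{B}{\pi}-\tfrac{B}{2\pi N}$. Letting $N\to\infty$ gives $J_{B,1}\ge B/\pi$, proving (ii). For (iii): if $J_{B,1}=B/\pi$ then $\{u_N\}$ is maximizing, but since $\sup_n|\hat u_N(n)|=(2\pi N)^{-1/2}\to 0$, no sequence of Fourier translates $e^{i\theta_kx}u_{N_k}$ (necessarily $\theta_k\in\mathbb Z$) can converge in $L^2(\mathbb T)$: any strong limit would have each Fourier coefficient $\lim_k\hat u_{N_k}(n-\theta_k)=0$, hence be $0$, contradicting norm $1$.

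\textbf{Part (i): reduction and concentration-compactness.} Let $\{u_j\}$ be maximizing, $\|u_j\|_{L^2}=1$, $W_B(u_j)\to J_{B,1}$. Since $|T_t(e^{imx}u)(x)|=|T_tu(x-2mt)|$, integer Fourier translation preserves both $\|\cdot\|_{L^2}$ and $W_B$, so it suffices to extract an integer-translated subsequence converging strongly in $L^2$; its limit is then automatically a maximizer. Apply Lions' concentration-compactness dichotomy to the probability measures $\mu_j=2\pi|\hat u_j|^2$ on $\mathbb Z$: along a subsequence one has compactness (some $n_j\in\mathbb Z$ with $\{\mu_j(\,\cdot-n_j)\}$ tight and of full mass), vanishing, or dichotomy. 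Both of the latter are to be excluded using hypothesis \eqref{maincond}.

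\textbf{Part (i): ruling out vanishing and dichotomy.} For vanishing — $\sup_n\sum_{|m-n|\le R}|\hat u_j(m)|^2\to 0$ for all $R$ — I would invoke a refinement of Bourgain's $L^4$ estimate, expected among the preliminary lemmas, bounding the non-resonant remainder (hence $W_B(u_j)-\tfrac{B}{\pi}$) by a power of the local Fourier concentration; under vanishing this forces $W_B(u_j)\to B/\pi<J_{B,1}$, a contradiction. This divisor-counting step is, I expect, the main obstacle. For dichotomy, after a profile-type refinement write (after translation) $u_j=v_j+w_j+o(1)$ in $L^2$ with $\operatorname{supp}\hat v_j$ in a fixed bounded window, $\operatorname{supp}\hat w_j$ outside a window of radius $\rho_j\to\infty$, and $\|v_j\|_{L^2}^2\to\alpha$, $\|w_j\|_{L^2}^2\to1-\alpha$, $0<\alpha<1$. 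Because the frequency packets are far apart, the odd-order interaction terms in the expansion of $|T_tu_j|^4$ carry a linear frequency constraint that cannot be met and vanish identically, while the $(T_tv_j)^2(\overline{T_tw_j})^2$-type terms are killed in the limit by their large ($\gtrsim\rho_j^2$) temporal frequency; hence
\[
W_B(u_j)=W_B(v_j)+W_B(w_j)+4\int_0^B\!\!\int_{\mathbb T}|T_tv_j|^2\,|T_tw_j|^2\,dx\,dt+o(1),
\]
and in the last integral only the $x$-zero mode of $|T_tv_j|^2|T_tw_j|^2$ survives, giving $\int_0^B\!\!\int_{\mathbb T}|T_tv_j|^2|T_tw_j|^2\to\tfrac{B}{2\pi}\alpha(1-\alpha)$. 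With $W_B(v_j)\le J_{B,1}\|v_j\|_{L^2}^4$ and $W_B(w_j)\le J_{B,1}\|w_j\|_{L^2}^4$ from \eqref{Strich}, passing to the limit gives $J_{B,1}\le J_{B,1}(\alpha^2+(1-\alpha)^2)+\tfrac{2B}{\pi}\alpha(1-\alpha)$, i.e.\ $J_{B,1}\cdot 2\alpha(1-\alpha)\le\tfrac{2B}{\pi}\alpha(1-\alpha)$, hence $J_{B,1}\le B/\pi$, contradicting \eqref{maincond}.

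\textbf{Part (i): conclusion.} Thus compactness holds: after translating, $\{\hat u_j\}$ is tight and (along a subsequence) converges weakly, hence coordinatewise, to some $\hat u_0$; tightness upgrades this to strong $\ell^2$, hence $L^2$, convergence, and conservation of mass gives $\|u_0\|_{L^2}=1$. Since the Strichartz inequality makes $W_B$ continuous on $L^2(\mathbb T)$ (via $|W_B(f)-W_B(g)|\le(\|T_tf\|_{L^4_{t,x}}+\|T_tg\|_{L^4_{t,x}})^3\|T_t(f-g)\|_{L^4_{t,x}}$) and $W_B$ is invariant under integer Fourier translation, $W_B(u_0)=\lim_jW_B(u_j)=J_{B,1}$; so $u_0$ is a maximizer and, in particular, maximizers exist.
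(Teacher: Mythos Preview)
Your overall architecture matches the paper's: concentration--compactness in Fourier space, ruling out vanishing and splitting to force tightness, then upgrading to strong convergence. The final dichotomy inequality $J_{B,1}\le J_{B,1}(\alpha^2+(1-\alpha)^2)+\tfrac{2B}{\pi}\alpha(1-\alpha)$ is exactly the paper's (after normalization), and your Sidon-set construction for (ii)--(iii) is a clean alternative to the paper's approach of taking a generic vanishing sequence and invoking the lemma that $D_B(u_j)\to 0$ under vanishing. Your final step---tight plus weak in $\ell^2$ implies strong---is also simpler than the paper's route through $H^1$ and Rellich.

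There is, however, a genuine gap in the vanishing step of part (i), which you explicitly defer. The paper does \emph{not} use divisor-counting or any arithmetic refinement of Bourgain's estimate; instead it proves directly (Lemma~\ref{Dconvg}) that if $\{\hat u_j\}$ vanishes then the non-resonant part $D_B(u_j)\to 0$, by splitting the sum over $(l,p)$ according to whether $|l|,|p|\le\beta$ or not, bounding the inner part by $\beta\,\sup_m\bigl(\sum_{|n-m|\le\beta}|\hat u_j(n)|^2\bigr)^{1/2}$ and the outer parts by $C\beta^{-1/2}$ via Young's convolution inequality, and then optimizing in $\beta$. Combined with $\|\hat u_j\|_{\ell^4}\to 0$ and the decomposition $W_B(u)=4\pi B\|\hat u\|_{\ell^2}^4-2\pi B\|\hat u\|_{\ell^4}^4+D_B(u)$, this gives $W_B(u_j)\to B/\pi$. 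This is the substantive analytic input you are missing.

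A second, smaller issue is in your dichotomy argument: you assume $\hat v_j$ is supported in a \emph{fixed} bounded window, which is not what the splitting alternative of concentration--compactness delivers (there $r_1$ depends on $\delta$ and may be large). Your claims that the odd-order cross terms ``vanish identically'' and that the $(T_tv_j)^2(\overline{T_tw_j})^2$ terms carry temporal frequency $\gtrsim\rho_j^2$ both rely on $r_1$ being bounded. The paper instead proves quantitative estimates (Lemma~\ref{lm:7}) valid in the general setup: each cross term $F(\cdot,\cdot,\cdot,\cdot)$ is $O(\delta^{1/2})$ except $F(v,v,w,w)$, which satisfies $|F(v,v,w,w)|\le (2\pi B+C\delta^{1/2})\|\hat v\|_{\ell^2}^2\|\hat w\|_{\ell^2}^2$. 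This yields the same limiting inequality you wrote, but without the unjustified fixed-window assumption.
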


As corollaries of this result, we obtain existence and non-existence results for maximizers of $J_{B,1}$ for certain values of $B$.  In Section 5 below we show that $J_{B,1}>B/\pi$ is true at least for all $B$ in some range $0 < B < B_4$, where $B_4 \approx 2.6$; and therefore $J_{B,1}$ does have maximizers for $B$ in this range (see Corollary \ref{existencerange}). On the other hand, we see that $J_{B,1} = B/\pi$ for all $B$ of the form $B= N\pi$, where $N \in \mathbb N$, and hence for these values of $B$, no maximizer for $J_{B,1}$ exists (see  Corollary \ref{nonexistence}).  

\bigskip
{\it Remark.} Although the questions of whether maximizers exist for $J_{B,1}$, and whether $J_{B,1}$ is equal to $B/\pi$, are somewhat subtle;  it is easy to answer the corresponding questions for {\it minimizers} of $W_B(u)$ subject to the constraint that $\|u\|_{L^2}=1$.  In fact, for every $B>0$ the minimum is equal to $B/2\pi$, and is attained at the constant function $v(x) \equiv 1/(2\pi)$ on $\mathbb T$.   To see this, note that by H\"older's inequality, if $\|u\|_{L^2}=1$ then
$$
B=\int_0^B\int_{\mathbb T} |u|^2\ dx\ dt = \int_0^B\int_{\mathbb T}|T_tu|^2\ dx\ dt \le \sqrt{W_B(u)} \sqrt{2\pi B},
$$
which implies that $W_B(u) \ge B/(2\pi) = W_B(v)$.

\bigskip
  
 By a well-known argument, the assertions of Theorem \ref{mainthm} yield results on the existence and stability of sets of ground-state solutions of the periodic DMNLS equation \eqref{DMNLS}, for a range of values of the period $L$.  We review these arguments below in Section 6, where we show (see Theorem \ref{stabilitythm}) that for all $L \in (0,2\pi/\sqrt{B_4})$, equation \eqref{DMNLS} has a one-parameter family $\{S_{L,\lambda}: \lambda > 0\}$ of non-empty sets $S_{L,\lambda}$ of ground-state profiles, and each set $S_{L,\lambda}$ is stable with respect to the flow defined by \eqref{DMNLS}.
 
   On the other hand, the nonexistence of maximizers of $J_{B,1}$ when $B$ is an integer multiple of $\pi$ translates into a nonexistence result for ground-state solutions of \eqref{DMNLS}:   when $L$ is of the form $L = 2\sqrt{\pi/N}$ for some $N \in \mathbb N$, then \eqref{DMNLS} can have no ground-state solutions (see Theorem \ref{nogroundstate}).

\section{Preliminary results}
\label{sec:prelim}

An important property of $W_B$ is that it is invariant with respect to translations in Fourier space as well as translations in physical space.  

\begin{lemma}  
\label{invariant}   Let $B>0$.
\item{(i)}  Suppose $u \in L^2(\mathbb T)$ and $x_0 \in \mathbb T$.  If we define $v \in L^2(\mathbb T)$ by $v(x)=u(x-x_0)$ for $x \in \mathbb T$, then for all $(t,x) \in \mathbb R \times \mathbb T$, we have $T_tv(x)=T_t(x-x_0)$.
In particular,
$$
W_B(v)=W_B(u).
$$
\item{(ii)} Suppose $u \in L^2(\mathbb T)$ and $n_0 \in \mathbb Z$.  If we define $w \in L^2(\mathbb T)$ by setting $\hat w(n)=\hat u(n-n_0)$ for all $n \in \mathbb Z$, then for all $(t,x) \in \mathbb R \times \mathbb T$, we have
$$
T_tw(x)=e^{in_0x}e^{-in_0^2 t} T_tu(x-2n_0t).
$$
In particular
$$
W_B(w)=W_B(u).
$$
\item{(iii)} If $\{u_j\}_{j\in \mathbb N}$ is a maximizing sequence for $J_{B,1}$ in $L^2(\mathbb T)$,  $\{m_j\}_{j \in \mathbb N}$ is a sequence of integers, and $\{x_j\}_{j \in \mathbb N}$ is a sequence in $\mathbb T$, then $\{e^{im_jx}u_j(x-x_j)\}_{j \in \mathbb N}$ is also a maximizing sequence for $J_{B,1}$.    Also, if $u$ is a maximizer for $J_{B,1}$, then $e^{imx}u(x-x_0)$ is also a maximizer, for every $m \in \mathbb Z$ and every $x \in \mathbb T$.
\end{lemma}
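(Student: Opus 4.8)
The plan is to establish the two transformation identities in parts (i) and (ii) by a direct computation from the Fourier-multiplier definition \eqref{defT} of $T_t$, and then to read off part (iii) as a formal consequence. For part (i), recall that a spatial translation acts on Fourier coefficients by modulation: if $v(x)=u(x-x_0)$ then $\hat v(n)=e^{-inx_0}\hat u(n)$ for all $n\in\mathbb Z$. Hence, by \eqref{defT},
$$
\mathcal F(T_tv)[n]=e^{-in^2t}\hat v(n)=e^{-inx_0}\bigl(e^{-in^2t}\hat u(n)\bigr)=e^{-inx_0}\,\mathcal F(T_tu)[n],
$$
and applying the Fourier inversion formula gives $T_tv(x)=T_tu(x-x_0)$ for a.e.\ $(t,x)\in\mathbb R\times\mathbb T$. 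Since $x\mapsto T_tu(x)$ is $2\pi$-periodic and Lebesgue measure on $\mathbb T$ is translation invariant, $\int_{\mathbb T}|T_tv(x)|^4\,dx=\int_{\mathbb T}|T_tu(x)|^4\,dx$ for every $t$, and integrating in $t$ over $[0,B]$ yields $W_B(v)=W_B(u)$.

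For part (ii), start from the Fourier series of $T_tw$ with $\hat w(n)=\hat u(n-n_0)$, substitute $k=n-n_0$, and complete the square $(k+n_0)^2=k^2+2kn_0+n_0^2$ to pull out the $k$-independent phase:
$$
T_tw(x)=\sum_{n\in\mathbb Z}e^{-in^2t}\hat u(n-n_0)\,e^{inx}
=e^{in_0x}e^{-in_0^2t}\sum_{k\in\mathbb Z}e^{-ik^2t}\hat u(k)\,e^{ik(x-2n_0t)}
=e^{in_0x}e^{-in_0^2t}\,T_tu(x-2n_0t).
$$
The three prefactors are unimodular, so $|T_tw(x)|=|T_tu(x-2n_0t)|$; for each fixed $t$, integrating the fourth power over $x\in\mathbb T$ and using once more that $T_tu$ is $2\pi$-periodic in $x$ gives $\int_{\mathbb T}|T_tw(x)|^4\,dx=\int_{\mathbb T}|T_tu(x)|^4\,dx$, and then integrating over $t\in[0,B]$ gives $W_B(w)=W_B(u)$.

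For part (iii), note that for $m\in\mathbb Z$ and $x_0\in\mathbb T$ the map $f\mapsto e^{imx}f(x-x_0)$ is the composition of the spatial translation of part (i) with the Fourier-space translation of part (ii): since $\widehat{e^{imx}f}(n)=\hat f(n-m)$, multiplication by $e^{imx}$ is exactly the shift $n_0=m$ of Fourier coefficients (here $m\in\mathbb Z$ is needed so that $e^{imx}$ is well-defined on $\mathbb T$). Both operations preserve the $L^2(\mathbb T)$ norm --- spatial translation trivially, and modulation by the unimodular factor $e^{imx}$ by Parseval's theorem --- and both preserve $W_B$ by (i) and (ii). Thus if $\|u_j\|_{L^2}=1$ and $W_B(u_j)\to J_{B,1}$, then $\|e^{im_jx}u_j(\cdot-x_j)\|_{L^2}=1$ and $W_B(e^{im_jx}u_j(\cdot-x_j))=W_B(u_j)\to J_{B,1}$, so $\{e^{im_jx}u_j(x-x_j)\}$ is again a maximizing sequence for $J_{B,1}$; the corresponding statement for maximizers follows in exactly the same way.

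There is no essential obstacle in this lemma: the whole content is the two multiplier identities, and the only points needing (routine) care are keeping track of the completed-square phase $e^{in_0x}e^{-in_0^2t}$ in (ii), and observing that the spatial shift $x\mapsto x-2n_0t$ appearing inside $T_tu$ in (ii) is harmless because $T_tu$ is $2\pi$-periodic in $x$, so it disappears upon integration over $\mathbb T$ --- in particular the restriction of the time integration to $[0,B]$ plays no role.
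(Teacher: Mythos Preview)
Your proof is correct and follows exactly the approach the paper indicates: the paper's own proof simply says that (i) and (ii) ``follow easily from the definition of $T_t$ as a Fourier multiplier operator'' and that (iii) follows immediately from (i) and (ii) together with the $L^2$-invariance of both operations, which is precisely what you have carried out in detail. Your explicit computation (modulation of Fourier coefficients for (i), the substitution $k=n-n_0$ and completion of the square for (ii)) is the natural way to unpack the paper's one-line justification.
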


\begin{proof}The statements in (i) and (ii) follow easily from the definition of $T_t$ as a Fourier multiplier operator.  We note that the invariance of $W_B$ under translations in Fourier space also follows immediately from the formula given below for $W_B$ in \eqref{WBeqsum}.

Part (iii) of the Lemma follows immediately from parts (i) and (ii), since the norm in $L^2(\mathbb T)$ is also invariant under translations in both physical space and Fourier space.
\end{proof}

We now state a version of Lions' concentration compactness lemma. 

\begin{lemma}\label{conccomp}  Fix $M>0$, and suppose that for each $j \in \mathbb N$, $\{a_j(n)\}_{n \in \mathbb Z}$ is an element of $\ell^2(\mathbb Z)$ such that $\| a_j\|^2_{\ell^2}=M$.  Then the sequence $\{a_j\}_{j \in \mathbb N}$ in $\ell^2(\mathbb Z)$ has a subsequence, still denoted by $\{a_j\}$, for which exactly one of the following three alternatives holds:\\
\begin{enumerate} 

\item  (Vanishing) For every $r\in \mathbb N$,
$$\lim_{j\to \infty}\sup_{m\in \mathbb{Z}} \sum_ {n=m-r}^{m+r} |a_j(n)|^2  =0.$$ 

\item (Splitting) 
There is an $\alpha \in (0,M)$ with the following property:   for every $\delta >0$,  there exist numbers $r_1,r_2 \in \mathbb N$ with $r_2- r_1 \ge \delta^{-1}$,  sequences $\{b_j\}_{j \in \mathbb N}$ and $\{c_j\}_{j \in \mathbb N}$ in $\ell^2(\mathbb Z)$, and an integer sequence $\{m_j\}_{j \in \mathbb N}$   such that for all $j \in \mathbb N$, 
$$b_j(n)=0 \ \text{for all $n \in \mathbb Z$  such that $|n-m_j| >r_1$},$$
$$c_j(n)=0 \ \text{for all $n \in \mathbb Z$ such that $|n-m_j| <r_2$},$$  
$$\|a_j-(b_j+c_j)\|^2_{\ell^2}\le \delta,$$
$$\left| \|b_j\|^2_{\ell^2}-\alpha\right|\le \delta, $$
and
$$\left| \|c_j\|^2_{\ell^2}-(M-\alpha)\right|\le \delta.$$ 

\item (Tightness) There exist integers $\{m_j\}_{j \in \mathbb N}$ such that for every $\epsilon>0,$ there exists $r \in \mathbb N$ so that
$$\sum_ {n=m_j -r}^{m_j +r} |a_j (n)|^2 \; dx \ge M-\epsilon$$ 
for all $j \in \mathbb N$.

\end{enumerate}
\end{lemma}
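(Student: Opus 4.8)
The plan is to carry out the discrete analogue of Lions' concentration–compactness argument, organized around the concentration functions
$$Q_j(r)=\sup_{m\in\mathbb Z}\ \sum_{n=m-r}^{m+r}|a_j(n)|^2,\qquad r\in\mathbb N .$$
Each $Q_j:\mathbb N\to[0,M]$ is nondecreasing, and $Q_j(r)\to M$ as $r\to\infty$ since $\sum_{|n|\le r}|a_j(n)|^2\to\|a_j\|_{\ell^2}^2=M$. Because the $Q_j$ are uniformly bounded and monotone, a diagonal argument over $r\in\mathbb N$ extracts a subsequence, still written $\{a_j\}$, along which $Q_j(r)\to Q(r)$ for every $r$, with $Q:\mathbb N\to[0,M]$ nondecreasing. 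I set $\alpha=\lim_{r\to\infty}Q(r)\in[0,M]$, and note $Q(r)\le\alpha$ for all $r$. I will show that $\alpha=0$ forces Vanishing, $\alpha=M$ forces Tightness, and $0<\alpha<M$ forces Splitting; since these ranges of $\alpha$ are exhaustive and the three alternatives are incompatible when $M>0$ (Vanishing forces $Q_j(r)\to 0$ for every $r$, while the other two force $Q_j(r)\not\to 0$ for some fixed $r$), this yields the trichotomy. The case $\alpha=0$ is immediate: then $Q(r)=0$, i.e.\ $Q_j(r)\to 0$ for each $r\in\mathbb N$, which is exactly alternative (1).

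\emph{Tightness.} If $\alpha=M$, I would first fix once and for all a radius $\rho$ with $Q(\rho)>M/2$; then for all large $j$ we have $Q_j(\rho)>M/2$, so there is $m_j\in\mathbb Z$ with $\sum_{|n-m_j|\le\rho}|a_j(n)|^2>M/2$ (choosing $m_j$ arbitrarily for the finitely many remaining $j$). Given $\epsilon\in(0,M/2)$, pick $r'$ with $Q(r')>M-\epsilon$; then for large $j$ there is $p_j$ with $\sum_{|n-p_j|\le r'}|a_j(n)|^2>M-\epsilon$, and since this window together with the $\rho$-window about $m_j$ carries more than $M$ units of mass, they must overlap, so $|m_j-p_j|\le\rho+r'$ and hence $\sum_{|n-m_j|\le\rho+2r'}|a_j(n)|^2>M-\epsilon$. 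Thus $r=\rho+2r'$ works for all large $j$, and enlarging $r$ to absorb the remaining finitely many indices (each $a_j$ lies in $\ell^2$) gives alternative (3).

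\emph{Splitting.} Suppose $0<\alpha<M$. Given $\delta>0$ (which I replace at the outset by $\delta/3$ to recover the stated constants), I choose $\rho$ with $Q(\rho)>\alpha-\delta$. Since $Q_j(\rho)\to Q(\rho)\le\alpha$, for all large $j$ there is $m_j$ with $\beta_j:=\sum_{|n-m_j|\le\rho}|a_j(n)|^2$ satisfying $|\beta_j-\alpha|\le\delta$. I set $r_1=\rho$ and $r_2=\rho+\lceil\delta^{-1}\rceil$, so $r_2-r_1\ge\delta^{-1}$. The key estimate is that the annular mass is small uniformly in $j$: since $\sum_{|n-m_j|\le r_2}|a_j(n)|^2\le Q_j(r_2)\to Q(r_2)\le\alpha$, for large $j$ the annular mass $\sum_{r_1<|n-m_j|\le r_2}|a_j(n)|^2$ is at most $(\alpha+\delta)-\beta_j\le 2\delta$. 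Then I define $b_j$ to equal $a_j$ on $|n-m_j|\le r_1$ and $0$ elsewhere, and $c_j$ to equal $a_j$ on $|n-m_j|\ge r_2$ and $0$ elsewhere; the three pieces $b_j$, $c_j$, $a_j-(b_j+c_j)$ have pairwise disjoint supports, so $\|a_j-(b_j+c_j)\|_{\ell^2}^2\le 2\delta$, $\|b_j\|_{\ell^2}^2=\beta_j$ lies within $\delta$ of $\alpha$, and $\|c_j\|_{\ell^2}^2=M-\beta_j-\|a_j-(b_j+c_j)\|_{\ell^2}^2$ lies within $3\delta$ of $M-\alpha$. Since $\alpha$ is the same for every $\delta$, this is alternative (2).

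I expect the Splitting case to be the main obstacle, precisely because the radii $r_1<r_2$ must be chosen independently of $j$ (with $r_2-r_1$ as large as prescribed) while simultaneously pinning the inner mass near $\alpha$ and the annular mass below $\delta$; the device that makes this work is the pointwise convergence $Q_j(r)\to Q(r)$ combined with the monotone bound $Q(r)\le\alpha$. As is standard for this lemma, the estimates in the Splitting and Tightness alternatives are obtained for all $j$ sufficiently large; after relabeling the subsequence this is the asserted form, and in any event it is all that the applications in the sequel require.
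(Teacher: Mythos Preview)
Your proposal is correct and is precisely the standard Lions concentration--compactness argument via the concentration functions $Q_j(r)$; the paper itself omits the proof and simply refers to the identical argument in \cite{kunze}, so your approach coincides with what the paper intends. Your caveat about the Splitting and Tightness estimates holding only for $j$ sufficiently large (to be absorbed by relabeling the subsequence) is also the standard treatment and matches the paper's later use of the lemma.
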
  

We omit the proof of Lemma \eqref{conccomp}, which is standard:  for example, except for obvious modifications it is the same as the proof given for Lemma 3.1 of \cite{kunze}.    However, for future reference we emphasize here that the three alternatives given in Lemma 3.1 are mutually exclusive.  In particular, if there exist integers $\{m_j\}$ such that the translated  sequence  $\{\tilde a_j\} = \{a_j(\cdot-m_j)\}$  converges strongly in $\ell^2(\mathbb Z)$, then all subsequences of $\{a_j\}$ are tight, and no subsequence of $\{a_j\}$ vanishes.   For indeed, if $\{\tilde a_j\}$ converges in $\ell^2$ norm to a limit $a\in \ell^2(\mathbb N)$, then we must have $\|a\|_{\ell^2}^2=M > 0$, and therefore for every $\epsilon > 0$ there exists $r \in \mathbb N$ such that $\sum_{n=-r}^r|a(n)|^2 > M - \epsilon$.  From the strong convergence of $\{\tilde a_j\}$ to $a$ in $\ell^2$, it then follows that
$$
\sum_{n=m_j-r}^{m_j+r}| a_j(n)|^2 > M-\epsilon
$$
for all sufficiently large $j$.  This implies that all subsequences of $\{a_j\}$ are tight, and that no subsequence can vanish.

The following lemma  gives a Fourier decomposition of $W_B(u)$ which will be important in analyzing the behavior of maximizing sequences. 
All sums which appear are intended to be performed over all integral values of the index of summation, unless otherwise specified.

\begin{lemma}\label{Tsu} Suppose $B>0$.  
\item{(i)} There exists $C>0$ such that for all $u \in L^2(\mathbb T)$,
\begin{equation}
\label{GBest}
G_B(u)\le C \|u\|_{L^2(\mathbb T)}^4,
\end{equation}
where
\begin{equation}
\label{defGB}
G_B(u)= 
\sum_l \sum_n \sum_p \frac{\left|\hat u (n)\hat u(n-l) \hat u(n-p)\hat u(n-p-l)\right|}{1+|lp|B} .
\end{equation}

\item{(ii)}   For all $u\in  L^2(\mathbb T)$, we have  
\begin{equation}
\label{WBeqsum}
W_B(u)=2\pi \sum_l \sum_n \sum_p \hat u (n)\bar{\hat u}(n-l) \bar{\hat u}(n-p)\hat{u}(n-p-l)\int_0^B e^{-2ilpt}\ dt ,
\end{equation} 
where the sum on the right-hand side converges absolutely.  

Moreover, there exists $C>0$ such that for all $u \in L^2(\mathbb T)$,
\begin{equation}
\label{WBest}
W_B(u)\le C \|u\|_{L^2(\mathbb T)}^4.
\end{equation}

\item{(iii)} For all $u \in L^2(\mathbb T)$,  we have
\begin{equation}
\label{WandD}
W_B(u)= 4\pi B\|\hat u\|^4_{\ell^2}- 2\pi B\|\hat u\|^4_{\ell^4}+ D_B(u),
\end{equation}
where
\begin{equation}
\label{defDB}
D_B(u)=2\pi \sum_{l \ne 0} \sum_n \sum_{p \ne 0} \hat u (n)\bar{\hat u}(n-l) \bar{\hat u}(n-p)\hat{u}(n-p-l) \int_0^B e^{-2ilpt}\ dt.
\end{equation}
(The sum on the right-hand side converges absolutely.)

 \end{lemma}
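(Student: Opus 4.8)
The plan is to reduce everything to one Fourier-series computation and then control the resulting triple sum by a counting argument; parts (ii) and (iii) are essentially bookkeeping once the estimate in (i) is in hand. Starting from $T_tu(x)=\sum_n\hat u(n)e^{inx}e^{-in^2t}$ and multiplying by the conjugate, one collects the $e^{ilx}$ coefficient to write
$$|T_tu(x)|^2=\sum_l A_l(t)\,e^{ilx},\qquad A_l(t)=\sum_n\hat u(n)\,\overline{\hat u(n-l)}\;e^{-il(2n-l)t}.$$
Since $|T_tu(x)|^2$ is real and $T_tu(\cdot)\in L^4(\mathbb T)$ for a.e.\ $t$ (by the Strichartz inequality), Parseval in $x$ gives $\int_{\mathbb T}|T_tu(x)|^4\,dx=2\pi\sum_l|A_l(t)|^2$, hence $W_B(u)=2\pi\sum_l\int_0^B|A_l(t)|^2\,dt$. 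Expanding $|A_l(t)|^2$, the exponentials combine to $e^{-2il(n-n')t}$; setting $p=n-n'$ and integrating in $t$ produces exactly the triple sum of \eqref{WBeqsum} with inner factor $\int_0^B e^{-2ilpt}\,dt$. The point is then that part (i) supplies the absolute convergence that legitimizes all of these rearrangements.

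I would prove (i) first. The elementary ingredient is $\bigl|\int_0^B e^{-2ilpt}\,dt\bigr|\le\min\{B,\,1/|lp|\}\le (1+B)/(1+|lp|B)$ for all $l,p$ (reading the right side as $1+B$ when $lp=0$), so it suffices to bound $G_B(u)$. The parts of $G_B(u)$ with $l=0$ or with $p=0$ are harmless: since $\sum_n\sum_p|\hat u(n)|^2|\hat u(n-p)|^2=\|\hat u\|_{\ell^2}^4$, each of those parts is at most $\|\hat u\|_{\ell^2}^4$. For the main part, with $l\neq0$ and $p\neq0$, I would change variables to the quadruple $(j_1,j_2,j_3,j_4)=(n,\,n-l,\,n-p,\,n-p-l)$, which ranges over $\{j_1+j_4=j_2+j_3,\ j_1\neq j_2,\ j_1\neq j_3\}$, and use the identity $2lp=j_1^2+j_4^2-j_2^2-j_3^2$. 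Grouping the quadruples by $s=j_1+j_4=j_2+j_3$ and by $Q_1=j_1^2+j_4^2$, $Q_2=j_2^2+j_3^2$ (so that $1+|lp|B=1+\tfrac12|Q_1-Q_2|B$), the key observation is that for fixed $s$ the map $j\mapsto j^2+(s-j)^2$ is at most $2$-to-$1$ with quadratically spaced image, whence $\#\{j:|Q(j)-Q_0|\le t\}\lesssim\sqrt t+1$ uniformly in $Q_0$ and $s$; summing a geometric series gives $\sup_{Q_0}\sum_{j}(1+\tfrac12|Q(j)-Q_0|B)^{-1}\le\kappa(B)<\infty$ for each fixed $B>0$. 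Writing $\gamma_s(j)=|\hat u(j)\,\hat u(s-j)|$ and applying AM--GM in the two $Q$-variables, the main part is then bounded by $\kappa(B)\sum_s\sum_j\gamma_s(j)^2=\kappa(B)\|\hat u\|_{\ell^2}^4$. Altogether $G_B(u)\le C\|u\|_{L^2}^4$, which is \eqref{GBest}. I expect this counting step to be the only real obstacle in the lemma.

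Given (i), part (ii) is routine. The term-by-term computation sketched above is valid as stated, and (i) together with $\bigl|\int_0^B e^{-2ilpt}\,dt\bigr|\le(1+B)/(1+|lp|B)$ shows that the triple sum in \eqref{WBeqsum} is absolutely bounded by $(1+B)\,G_B(u)<\infty$; this justifies interchanging the $t$-integration with the summations and re-summing in any order, yielding \eqref{WBeqsum}, and it also gives $W_B(u)\le 2\pi(1+B)\,G_B(u)\le C\|u\|_{L^2}^4$, which is \eqref{WBest}. (One could alternatively deduce \eqref{WBest} from Bourgain's estimate for $B=2\pi$ \cite{bourgain} together with the $2\pi$-periodicity of $t\mapsto T_t$, but the route above is self-contained once (i) is available.)

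Finally, for (iii) I would split the absolutely convergent sum \eqref{WBeqsum} according to whether $l=0$, $p=0$, both, or neither. When $l=0$ the $t$-integral equals $B$ and $\sum_n\sum_p|\hat u(n)|^2|\hat u(n-p)|^2=\|\hat u\|_{\ell^2}^4$, so that part contributes $2\pi B\|\hat u\|_{\ell^2}^4$; symmetrically the $p=0$ part contributes $2\pi B\|\hat u\|_{\ell^2}^4$; these two overlap exactly in the $l=p=0$ part, which contributes $2\pi B\|\hat u\|_{\ell^4}^4$; and the remaining $l\neq0,\ p\neq0$ part is precisely $D_B(u)$ (whose absolute convergence is again immediate from \eqref{GBest}). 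Inclusion--exclusion then gives \eqref{WandD}.
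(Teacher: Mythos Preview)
Your argument is essentially correct, but in both (i) and (ii) you take a different route from the paper.

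For (i), the paper's proof is considerably more elementary. Rather than regrouping quadruples by $s=j_1+j_4$ and appealing to the parabolic spacing of $Q(j)=j^2+(s-j)^2$, the paper simply splits the sum according to whether $|p|<|l|$ or $|p|\ge |l|$. In the first region one has $1/(1+|lp|B)\le 1/(1+p^2B)=:K(p)$ with $K\in\ell^1(\mathbb Z)$; performing the $l$-sum by Cauchy--Schwarz and then the $p$-sum by Young's convolution inequality against $K$ gives the bound directly, and the second region is symmetric. Your counting approach is valid and has the advantage that it generalizes to other dispersion relations, but for the quadratic phase the paper's argument is shorter and avoids any lattice-point counting.

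For (ii), there is a mild circularity in your justification: you invoke $T_tu\in L^4(\mathbb T)$ for a.e.\ $t$ ``by the Strichartz inequality'' in order to apply Parseval to $|T_tu|^2$, but the Strichartz bound \eqref{WBest} is exactly what part (ii) is establishing. The paper closes this by a density step: the identity \eqref{WBeqsum} is first verified for $u\in\mathcal D$, where every sum is finite and no analytic issue arises; then for general $u$ one approximates by Fourier truncations $u_N$. The bound already proved on $\mathcal D$ shows $\{T_tu_N\}$ is Cauchy in $L^4_{t,x}$, and since it converges to $T_tu$ in $L^2_{t,x}$ the limits agree, giving $W_B(u_N)\to W_B(u)$; dominated convergence (majorant supplied by (i)) passes the triple sum to the limit. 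Your parenthetical remark that one could cite Bourgain's estimate to break the circularity is a legitimate alternative, but the paper's density argument keeps the lemma self-contained.

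For (iii), your inclusion--exclusion splitting coincides with the paper's four-part decomposition.
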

 
\begin{proof}
Suppose $u \in L^2(\mathbb T)$.  We can decompose the triple sum which defines $G_B(u)$ into two parts $(I)$ and $(II)$, where $(I)$ represents the sum taken over all $(l,n,p) \in \mathbb Z^3$ for which $|p| < |l|$, and
$(II)$ represents the sum taken over all $(l,n,p)$ for which $|p| \ge |l|$.

Define $K:\mathbb Z \to \mathbb R$ by $K(n)=1/(1+|n|^2B)$, and note that $K \in \ell^1(\mathbb Z)$. If $|p| < |l|$, then we have $1/(1+|lp|B) \le K(p)$.   Therefore we can use Holder's inequality and Young's convolution inequality
to make the estimate
\begin{equation}
\begin{aligned}
(I) &\le \sum_n \sum_p K(p)\left|\hat u (n)\hat u(n-p)\right| \sum_l\left|\hat u(n-l)\hat u(n-p-l)\right|\\
& \le \|\hat u\|_{\ell^2}^2\sum_n\left|\hat u(n)\right|\sum_pK(p)\left|\hat u(n-p)\right|\\
&= \|\hat u\|_{\ell^2}^2\sum_n\left|\hat u(n)\right| (K \ast |\hat u|)(n)\\
&\le \|\hat u\|_{\ell^2}^2\|\hat u\|_{\ell^2}\|K \ast |\hat u|\|_{\ell^2}\\
&\le \|\hat u\|_{\ell^2}^4\|K\|_{\ell^1} \le C\|u\|_{L^2(\mathbb T)}^4.
\end{aligned}
\end{equation} 
On the other hand, when $|p| \ge |l|$, we have $1/(1+|lp|B) \le K(l)$, so we can write
$$
(II) \le  \sum_n \sum_l K(l)\left|\hat u (n)\hat u(n-l)\right| \sum_p\left|\hat u(n-p)\hat u(n-p-l)\right|,
$$
and then use the same argument as for $(I)$, only with $l$ and $p$ interchanged, to show that $(II)\le C\|u\|_{L^2(\mathbb T)}^4$. 
This then proves part (i) of the Lemma.

To prove part (ii), suppose first that $u \in \mathcal D$, the space of all functions $u \in L^2(\mathbb T)$ such that $\hat u$ is compactly supported in $\mathbb Z$,
so that in particular $T_tu$ is bounded on $\mathbb T$ for all $t \in \mathbb R$, and all the computations which follow are readily justified. 
Writing $T_tu=\sum_{n \in \mathbb{Z}} \hat u(n) e^{i (nx -n^2t)}$, we obtain
$$
\begin{aligned}
W_B(u)&=\|T_tu\|^4_{{L_{t,x}^4}([0,B]\times \mathbb{T})}=\| T_tu \cdot\overline {T_tu }\|^2_{{L_{t,x}^2}([0,B]\times \mathbb{T})}\\
&=\left\| \sum_n\sum_m \hat u (n)\bar{\hat u}(m) e^{i ((n-m)x -(n^2-m^2)t)}\right\|^2_{ L^2_{t,x}([0,B]\times \mathbb{T})}\\
&=\left \|  \sum_n \sum_l \hat u (n)\bar{\hat u }(n-l) e^{ilx}e^{ -il(2n-l)t}\right\|^2 _{ L^2_{t,x}([0,B]\times \mathbb{T})},
\end{aligned}
$$
where in the last step we used $l=n-m$ as an index of summation.
Now letting 
$$
b(l,t)= \sum_n \hat u (n)\bar{\hat u}(n-l) e^{ -il(2n-l)t},
$$ 
we can write
$$W_B(u)=\int_0^B \int _\mathbb{T} \left| \sum_l  b(l,t) e^{ilx} \right|^2 \ dx\ dt.$$
Using Parseval's theorem, we obtain that 
\begin{equation}
\label{sum4}
\begin{aligned}
W_B(u)&=2\pi \int_0^B\sum_l |b(l,t)|^2 \ dt \\
&= 2\pi \sum_l \sum_n\sum_r \hat u (n)\bar{\hat u}(n-l) \hat{u}(r-l)\bar{\hat u}(r)  \int_0^B e^{-il(2n-2r)t} \ dt.\end{aligned}
\end{equation}
Changing the index of summation in the innermost sum to $p=n-r$ yields the sum on the right-hand side of \eqref{WBeqsum}.
Thus we have proved that \eqref{WBeqsum} holds, at least in the case when $u \in \mathcal D$. 

In light of the fact that 
\begin{equation}
 \left|\int_0^B e^{-2i\theta t}\ dt\right|  \le \frac{2B}{1+|\theta|B}
 \label{integralest}
\end{equation}
for all $\theta \in \mathbb R$ and all $B>0$, it follows from what we have proved that there exists  $C>0$ such that
$$
W_B(u) \le CG_B(u)  
$$
and therefore
\begin{equation}
\label{boundonTt}
W_B(u) =\|T_t u\|_{L^4_{t,x}([0,B]\times \mathbb T)}^4 \le C \|u\|_{L^2(\mathbb T)}^4
\end{equation}
for all $u \in \mathcal D$. 

Given any $u \in L^2(\mathbb T)$, define a sequence $\{u_N\} \in \mathcal D$ by setting $\widehat{u_N}(n)=\hat u(n)$ for $|n| \le N$ and $\widehat{u_N}(n)=0$ for $|n| > N$.    Then  
\begin{equation}
\label{WBdecompN}
W_B(u_N)= 2\pi \sum_l \sum_n \sum_p \widehat{ u_N} (n)\overline{\widehat{ u_N}}(n-l) \overline{\widehat{ u_N}}(n-p)\widehat{u_N}(n-p-l)\int_0^B e^{-2ilpt}\ dt
\end{equation} holds for each $N \in \mathbb N$. 

By Parseval's theorem, $T_tu_N$ converges to $T_t u$ in $L^2_{t,x}([0,2\pi]\times \mathbb T)$ as $N \to \infty$.  
It follows from \eqref{boundonTt} that $T_tu_N$ also converges to $T_t u$ in $L^2_{t,x}([0,B]\times \mathbb T)$ for every $B \in [0,2\pi]$, 
and then by periodicity for every $B \in \mathbb R$.  Also, by what we have proved, $\{T_tu_N\}_{N \in \mathbb N}$ is a
 Cauchy sequence in $L^4_{t,x}([0,B]\times \mathbb T)$, and so converges in the norm of $L^4_{t,x}([0,B]\times \mathbb T)$ to some limit, which must therefore
 equal $T_tu$.  It follows that
 $$
 W_B(u)=\lim_{N \to \infty} W_B(u_N).
 $$
 On the other hand, it follows from part (i) of the Lemma, \eqref{integralest}, \eqref{WBdecompN}, and the Dominated Convergence Theorem that
 $$
 \lim_{N \to \infty}W_B(u_N)=2\pi \sum_l \sum_n \sum_p \hat u (n)\bar{\hat u}(n-l) \bar{\hat u}(n-p)\hat{u}(n-p-l)\int_0^B e^{-2ilpt}\ dt ,
 $$
 with the sum on the right-hand side converging absolutely.  Therefore part (ii) of the Lemma has been proved. 
 
To prove part (iii), we proceed by splitting the sum in \eqref{WBeqsum} into four parts, according to whether $p$ and $l$ are zero or nonzero. 

First, we sum over all values of $l$, $n$, and $p$ such that $l\ne0$ and $p=0$. This gives
$$
\begin{aligned}
 2\pi B \sum_{l\neq 0} \sum_n\left|\hat u (n)\hat u(n-l)\right|^2&= 2\pi B\sum_n \sum_{m\neq n }\left|\hat u (n)\hat u(m)\right|^2  \\
&= 2\pi B\left(\sum_n \sum_m\left| \hat u (n)\hat u(m)\right|^2 - \sum_n\left|\hat u (n)\right|^4\right) \\
&= 2\pi B\left(\|\hat u\|^4_{\ell^2}-\|\hat u\|^4_{\ell^4} \right). 
\end{aligned}
$$

Second, we sum over all values of  $l$, $n$, and $p$ such that $l=0$ and $p \ne 0$, obtaining the same result as above:  that is,
$$
2\pi B\left(\|\hat u\|^4_{\ell^2}-\|\hat u\|^4_{\ell^4} \right).
$$

Third, we sum over all values of  $l$, $n$, and $r$ such that $l=0$ and $p=0$, resulting in
$$
 2\pi B \sum_n \left|\hat u (n)\right|^4 = 2\pi B\|\hat u\|^4_{\ell^4}. 
 $$

Finally, if we sum over all values of  $l$, $n$, and $p$ such that $l \ne 0$ and $p \ne 0$,   we obtain the sum in \eqref{defDB} which defines $D_B(u)$.  (Note that the absolute convergence of this sum is guaranteed by part (ii) of the Lemma.) Taking the sum of all four parts, we obtain the result
\eqref{WandD}, completing the proof of the Lemma. 
\end{proof}

For what follows, we note that if $\{u_j\}$ is a sequence in $L^2(\mathbb T)$ such that $\|u_j\|_{L^2}^2 = 1$ for all $j \in \mathbb N$, then by Parseval's theorem, we have that
$\{\hat u_j\}$ is a sequence in $\ell^2(\mathbb Z)$ with $\|\hat u_j\|_{\ell^2}^2 = \frac{1}{2\pi}$, and therefore we can apply Lemma \ref{conccomp} to $\{\hat u_j\}$ with $M=\frac{1}{2\pi}$.

\begin {lemma}\label{l4convg} 
Let  $\{u_j\}_{j\in \mathbb N}\subset L^2(\mathbb{T})$ be a sequence such that $\|u_j\|^2_{L^2} = 1$ for all $j \in \mathbb N$.  Suppose that the sequence $\{\widehat{u_j}\}$ in $\ell^2(\mathbb Z)$  vanishes in the sense of Lemma \ref{conccomp}. Then $\|\widehat u_j\|^4_{\ell^4} \to 0$ as  $j\to \infty.$ 
\end{lemma}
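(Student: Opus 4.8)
The plan is to reduce the $\ell^4$ decay of $\{\widehat{u_j}\}$ to an $\ell^\infty$ decay, using the uniform $\ell^2$ bound coming from the normalization, via the elementary interpolation inequality $\|a\|_{\ell^4}^4 \le \|a\|_{\ell^\infty}^2\,\|a\|_{\ell^2}^2$.

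First I would extract from the vanishing hypothesis that $\|\widehat{u_j}\|_{\ell^\infty} \to 0$. Applying the vanishing condition of Lemma \ref{conccomp} with (say) $r = 1$, for every $m \in \mathbb{Z}$ the single term $|\widehat{u_j}(m)|^2$ is dominated by the block sum $\sum_{n=m-1}^{m+1}|\widehat{u_j}(n)|^2$, so
\[
\sup_{m \in \mathbb{Z}}|\widehat{u_j}(m)|^2 \;\le\; \sup_{m \in \mathbb{Z}}\sum_{n=m-1}^{m+1}|\widehat{u_j}(n)|^2 \;\longrightarrow\; 0
\]
as $j \to \infty$; that is, $\|\widehat{u_j}\|_{\ell^\infty}^2 \to 0$.

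Second, I would invoke the normalization: since $\|u_j\|_{L^2}^2 = 1$, Parseval's theorem gives $\|\widehat{u_j}\|_{\ell^2}^2 = \tfrac{1}{2\pi}$ for all $j$ (as already noted just before the statement). Hence
\[
\|\widehat{u_j}\|_{\ell^4}^4 = \sum_{n \in \mathbb{Z}}|\widehat{u_j}(n)|^4 \;\le\; \Big(\sup_{n \in \mathbb{Z}}|\widehat{u_j}(n)|^2\Big)\sum_{n \in \mathbb{Z}}|\widehat{u_j}(n)|^2 \;=\; \frac{1}{2\pi}\,\|\widehat{u_j}\|_{\ell^\infty}^2,
\]
and the right-hand side tends to $0$ by the first step, which proves the lemma.

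There is essentially no obstacle here: the only point needing a moment's care is matching the index $r$ in the vanishing condition (stated for $r \in \mathbb{N}$) to the pointwise control of $\widehat{u_j}$, which is handled by fixing any $r \ge 1$. Conceptually this is just the discrete version of the standard fact that an $\ell^2$-bounded sequence which vanishes locally must converge to zero in every $\ell^p$ with $p > 2$.
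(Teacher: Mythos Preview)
Your proof is correct and follows essentially the same approach as the paper: both arguments extract $\|\widehat{u_j}\|_{\ell^\infty}\to 0$ from the vanishing hypothesis and then conclude via the interpolation inequality $\|\widehat{u_j}\|_{\ell^4}^4 \le \|\widehat{u_j}\|_{\ell^\infty}^2\|\widehat{u_j}\|_{\ell^2}^2$ together with the uniform $\ell^2$ bound.
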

\begin{proof}
If $\{\widehat{u_j}\}$ vanishes, then for each $r \in \mathbb N$ and for each $\epsilon >0$, there exists  $N= N(r,\epsilon)\in \mathbb{N}$ such that
$$\sup_{m\in \mathbb{Z}} \sum_{n=m-r}^{m+r}|\widehat{u_j}(n)|^2 < \frac\epsilon2$$ for $j\ge N.$  In particular,  
 for $j\ge N$ we have that $\displaystyle{ |\widehat{u_j}(n)|^2< \frac\epsilon2}$ for all $n \in \mathbb Z$. This implies that $\displaystyle {\|\widehat{u_j}\|_{\ell^\infty}\to 0}$ as  $j\to \infty.$  Since $\displaystyle {\|\widehat{u_j}\|^4_{\ell^4}\le \|\widehat{u_j}\|_{\ell^2}^2\|\widehat{u_j}\|^2_{\ell^\infty}}$,  it follows that $\|\widehat u_j\|^4_{\ell^4} \to 0$ as  $j\to \infty.$ \\

\end{proof}

\begin{definition} For $u_1, u_2, u_3, u_4 \in L^2(\mathbb T)$, define
$$
F(u_1,u_2,u_3,u_4)=\int_0^B \int_{\mathbb T} T_tu_1 \ \overline{ T_tu_2}\ T_tu_3 \ \overline{ T_tu_4}\ dx\ dt.
$$
\end{definition} 

\begin{lemma}\label{multi} There exists $C> 0$ such that
for all functions $u_1$, $u_2$,  $u_3$, and $ u_4$ in $L^2(\mathbb T)$,
$$|F(u_1,u_2,u_3,u_4)|\le C \|u_1\|_{L^2_x} \|u_2\|_{L^2_x} \|u_3\|_{L^2_x} \|u_4 \|_{L^2_x}.$$
\end{lemma}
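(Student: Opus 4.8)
The plan is to reduce this multilinear bound to the diagonal Strichartz estimate \eqref{WBest} of Lemma \ref{Tsu}(ii) by means of the generalized H\"older inequality, so that essentially no new work is needed.

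First I would record that, by \eqref{WBest}, for each $j \in \{1,2,3,4\}$ the function $(t,x)\mapsto T_t u_j(x)$ belongs to $L^4_{t,x}([0,B]\times\mathbb T)$, with
$$
\|T_t u_j\|_{L^4_{t,x}([0,B]\times\mathbb T)} = W_B(u_j)^{1/4} \le C^{1/4}\,\|u_j\|_{L^2(\mathbb T)},
$$
where $C$ is the constant in \eqref{WBest}. In particular the integrand $T_tu_1\,\overline{T_tu_2}\,T_tu_3\,\overline{T_tu_4}$ appearing in the definition of $F$ is a product of four functions in $L^4_{t,x}([0,B]\times\mathbb T)$, hence lies in $L^1_{t,x}([0,B]\times\mathbb T)$, so that $F(u_1,u_2,u_3,u_4)$ is a well-defined finite number. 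I would then apply the generalized H\"older inequality on the product measure space $[0,B]\times\mathbb T$ with the four exponents all equal to $4$ (note $\tfrac14+\tfrac14+\tfrac14+\tfrac14 = 1$), obtaining
$$
|F(u_1,u_2,u_3,u_4)| \le \prod_{j=1}^{4} \|T_t u_j\|_{L^4_{t,x}([0,B]\times\mathbb T)}.
$$
Combining this with the bound on each factor displayed above immediately gives $|F(u_1,u_2,u_3,u_4)| \le C\,\|u_1\|_{L^2}\|u_2\|_{L^2}\|u_3\|_{L^2}\|u_4\|_{L^2}$ with the same constant $C$, which is the assertion of the Lemma.

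Because the argument is just H\"older plus a previously established inequality, there is no genuine obstacle; the one point worth checking is simply that each $T_t u_j$ really lies in $L^4_{t,x}$ before one can invoke H\"older, and this is exactly what \eqref{WBest} provides. If one preferred a proof that does not cite \eqref{WBest} but instead mirrors the proof of Lemma \ref{Tsu}(i), one could expand $F$ in Fourier series exactly as in the derivation of \eqref{WBeqsum} to get
$$
F(u_1,u_2,u_3,u_4) = 2\pi \sum_l\sum_n\sum_p \hat u_1(n)\,\overline{\hat u_2}(n-l)\,\hat u_3(n-p-l)\,\overline{\hat u_4}(n-p)\int_0^B e^{-2ilpt}\,dt,
$$
then estimate $\bigl|\int_0^B e^{-2ilpt}\,dt\bigr| \le 2B/(1+|lp|B)$ via \eqref{integralest}, split the triple sum into the regions $|p|<|l|$ and $|p|\ge|l|$, and run the same H\"older/Young-convolution estimate used to bound $G_B$ in Lemma \ref{Tsu}(i), with the four copies of $\hat u$ replaced by $\hat u_1,\hat u_2,\hat u_3,\hat u_4$: in the region $|p|<|l|$ one sums first over $l$, using Cauchy--Schwarz to bound $\sum_l|\hat u_2(n-l)\hat u_3(n-p-l)|$ by $\|\hat u_2\|_{\ell^2}\|\hat u_3\|_{\ell^2}$, and then convolves $|\hat u_1|$ with $K(p)=1/(1+|p|^2B)\in\ell^1$; symmetrically in the region $|p|\ge|l|$ one sums first over $p$ and convolves with $K(l)$. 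Either route yields the stated estimate.
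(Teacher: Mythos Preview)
Your proof is correct and is essentially identical to the paper's: both apply H\"older's inequality with four $L^4$ factors and then invoke the Strichartz bound \eqref{WBest} from Lemma \ref{Tsu}(ii) on each factor. The alternative Fourier-series argument you sketch is unnecessary but also correct.
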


 \begin{proof} By Holder's inequality and Lemma \ref{Tsu}, we have
 $$
\begin{aligned}
\int_0^B\int_\mathbb{T}\left| T_tu_1 \ \overline{ T_tu_2}\ T_tu_3 \ \overline{ T_tu_4}\right|\ dx\ dt &\le  \|T_tu_1 \|_{L^4_{s,x}}\ \|T_tu_2\|_{L^4_{t,x}}\ \|T_tu_3\|_{L^4_{t,x}} \ \| T_tu_4\|_{L^4_{t,x}}\\
&\le C \|u_1\|_{L^2_x} \|u_2\|_{L^2_x} \|u_3\|_{L^2_x} \|u_4 \|_{L^2_x}.
\end{aligned}
$$
\end{proof}

\begin{lemma}\label{multi2}  There exists $C>0$ such that for all $u,v,w,h \in L^2(\mathbb{T})$ with $ u= v+w+h$ and $\|h\|_{L^2}\le1$, we have
\begin{equation}\label{Tsudecomp}
\begin{aligned}
&\left|\|T_tu\|^4_{ L_{t,x}^4} -\|T_tv\|_{ L_{t,x}^4}^4 -\|T_tw\|_{ L_{t,x}^4}^4\right| \le C \left( 1+ \|u\|^3_{L^2}+ \|v\|^3_{L^2}+ \|w\|^3_{L^2}\right)\|h\|_{L^2}+ \\
&\ \ \ \ \ \ \ +4F(v,v,w,w)+F(w,v,w,v)+F(v,w,v,w)+\\
&\ \ \ \ \ \ +2\left[F(v,v,v,w)+F(v,v,w,v)+F(v,w,w,w)+F(w,v,w,w)\right].
\end{aligned}
\end{equation}

\end{lemma}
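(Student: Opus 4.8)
The plan is to expand $W_B(u)=\|T_tu\|_{L^4_{t,x}([0,B]\times\mathbb T)}^4$ by multilinearity and group the resulting terms. Directly from the definition of $F$ (with all four arguments equal) one has $W_B(u)=F(u,u,u,u)$, and likewise $W_B(v)=F(v,v,v,v)$ and $W_B(w)=F(w,w,w,w)$. Since $T_t$ is linear, $F$ is linear in each of its four arguments, so substituting $u=v+w+h$ into every slot and expanding gives $F(u,u,u,u)$ as the sum of the $3^4=81$ terms $F(\xi_1,\xi_2,\xi_3,\xi_4)$ with $\xi_1,\xi_2,\xi_3,\xi_4\in\{v,w,h\}$. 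I would sort these into three groups.

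The first group is the two ``pure'' terms $F(v,v,v,v)=W_B(v)$ and $F(w,w,w,w)=W_B(w)$, which account for the two subtracted terms on the left of \eqref{Tsudecomp}. The second group consists of the $3^4-2^4=65$ terms in which at least one $\xi_i$ equals $h$. For each of these, Lemma \ref{multi} gives $|F(\xi_1,\xi_2,\xi_3,\xi_4)|\le C\|\xi_1\|_{L^2}\|\xi_2\|_{L^2}\|\xi_3\|_{L^2}\|\xi_4\|_{L^2}$; one of the four norms is $\|h\|_{L^2}$, and the remaining three are each at most $\|v\|_{L^2}+\|w\|_{L^2}+\|h\|_{L^2}\le\|v\|_{L^2}+\|w\|_{L^2}+1$ by the hypothesis $\|h\|_{L^2}\le1$. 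Summing the $65$ terms and using $(\|v\|_{L^2}+\|w\|_{L^2}+1)^3\le C(1+\|v\|_{L^2}^3+\|w\|_{L^2}^3)$ bounds the contribution of this group by $C(1+\|v\|_{L^2}^3+\|w\|_{L^2}^3)\|h\|_{L^2}$, which is at most the first term on the right of \eqref{Tsudecomp} (the extra $\|u\|^3_{L^2}$ there only weakens the inequality).

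The third group is the remaining $2^4-2=14$ ``cross'' terms, those with $\xi_1,\dots,\xi_4\in\{v,w\}$ not all equal. Here I would use the two symmetries $F(\xi_1,\xi_2,\xi_3,\xi_4)=F(\xi_3,\xi_2,\xi_1,\xi_4)=F(\xi_1,\xi_4,\xi_3,\xi_2)$, which hold because slots $1,3$ carry $T_t(\cdot)$ and slots $2,4$ carry $\overline{T_t(\cdot)}$, to merge the $14$ terms over orbits: the four ``three $v$'s, one $w$'' terms form two orbits of size $2$ and contribute $2F(v,v,v,w)+2F(v,v,w,v)$; the four ``three $w$'s, one $v$'' terms form two more orbits of size $2$ and contribute $2F(v,w,w,w)+2F(w,v,w,w)$; and the six ``two $v$'s, two $w$'s'' terms split as one orbit of size $4$ contributing $4F(v,v,w,w)$ plus the two fixed terms $F(v,w,v,w)$ and $F(w,v,w,v)$. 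Hence this group sums to exactly the combination $4F(v,v,w,w)+F(w,v,w,v)+F(v,w,v,w)+2[F(v,v,v,w)+F(v,v,w,v)+F(v,w,w,w)+F(w,v,w,w)]$ on the right of \eqref{Tsudecomp}. (As a check, the same combination results from integrating the pointwise identity $|a+b|^4-|a|^4-|b|^4=4|a|^2|b|^2+2\Re(a^2\bar b^2)+4\Re(|a|^2a\bar b)+4\Re(|b|^2a\bar b)$ over $[0,B]\times\mathbb T$ with $a=T_tv$, $b=T_tw$.) Adding the three groups and applying the triangle inequality yields \eqref{Tsudecomp}.

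The analytic input is exactly Lemma \ref{multi}, which is already available, so the only real work is bookkeeping: enumerating the $81$-term expansion, checking that every term with a factor of $h$ falls in the second group, and verifying that the $14$ cross terms assemble into precisely the stated linear combination with the coefficients $4,1,1,2,2,2,2$. That combinatorial step is the one most prone to a slip, though it is routine.
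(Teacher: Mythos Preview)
Your proposal is correct and follows essentially the same approach as the paper: both expand $|T_tv+T_tw+T_th|^4$ multilinearly, separate off the pure $v$- and $w$-terms, bound every term containing a factor of $h$ via Lemma~\ref{multi} together with an elementary inequality on the norms, and identify the remaining fourteen $v/w$ cross terms as exactly the stated $F$-combination. You are simply more explicit about the combinatorics (the $81$-term count and the orbit decomposition under the slot-swap symmetries) than the paper, which just says ``expanding'' and leaves the bookkeeping to the reader.
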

\begin{proof} For $u,v,w,h \in L^2(\mathbb{T})$, we have
$$\|T_tu\|_{ L_{t,x}^4}^4 -\|T_tv\|_{ L_{t,x}^4}^4 -\|T_tw\|_{ L_{t,x}^4}^4 =\int_0^B\int_\mathbb{T} |T_tv+T_tw +T_th|^4 - |T_tv|^4 -|T_tw|^4 \  dx\ dt.$$
By writing the integrand on the right hand side of the above equation as $$[T_tv+T_tw +T_th]^2 [\overline{T_tv}+\overline{T_tw}+\overline{T_th}]^2- |T_tv|^4 -|T_tw|^4 $$ and expanding, we obtain
$$\int_0^B\int_\mathbb{T} |T_tv+T_tw +T_th|^4 - |T_tv|^4 -|T_tw|^4 \ dx\ dt = A+ B+ R,$$
where $A$ is a finite sum of terms of the form $\int_0^B\int_\mathbb{T} T_tf_1 \ \overline{ T_tf_2}\ T_tf_3 \ \overline{ T_th}\  dx\ dt$ and $B$ is a finite sum of terms of the form $\int_0^B\int_\mathbb{T} \overline{T_tf_1}\ T_tf_2\ \overline{ T_tf_3} \ T_th\ dx\ dt$, with  $f_1,f_2,f_3\in \{u,v,w,h\}$, and $R$ consists of the seven terms involving $F$ on the right side of \eqref{Tsudecomp}.  We apply the triangle inequality, Lemma \ref{multi}, and Young's inequality to the terms in $A$ and $B$ to get the desired result.
\end{proof}

\begin{lemma}
\label{WBcont}
   The map $W_B:L^2(\mathbb T) \to \mathbb R$ is continuous.
\end{lemma}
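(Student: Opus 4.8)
The plan is to exploit the multilinear structure of $W_B$ together with the boundedness estimate of Lemma \ref{multi}. The starting observation is that $W_B(u) = F(u,u,u,u)$, since the integrand $|T_tu|^4$ in the definition of $W_B$ factors as $T_tu\,\overline{T_tu}\,T_tu\,\overline{T_tu}$. Recall that $F$ is linear in its first and third arguments and conjugate-linear in its second and fourth; in either case it is additive in each slot.

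First I would fix $v \in L^2(\mathbb T)$ and write an arbitrary $u \in L^2(\mathbb T)$ as $u = v + h$ with $h = u - v$. Expanding $F(v+h,\,v+h,\,v+h,\,v+h)$ by additivity in each of the four arguments gives a sum of $2^4 = 16$ terms of the form $F(g_1,g_2,g_3,g_4)$ with each $g_i \in \{v,h\}$; the unique term with all $g_i = v$ equals $W_B(v)$, so
\[
W_B(u) - W_B(v) = \sum F(g_1,g_2,g_3,g_4),
\]
where the sum runs over the $15$ choices of $(g_1,g_2,g_3,g_4)$ in which at least one $g_i$ equals $h$.

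Next I would apply Lemma \ref{multi} to each of these $15$ terms, obtaining $|F(g_1,g_2,g_3,g_4)| \le C\|g_1\|_{L^2}\|g_2\|_{L^2}\|g_3\|_{L^2}\|g_4\|_{L^2}$. Grouping the terms according to the number $k \in \{1,2,3,4\}$ of indices $i$ with $g_i = h$, this yields
\[
|W_B(u) - W_B(v)| \le C\sum_{k=1}^{4}\binom{4}{k}\,\|v\|_{L^2}^{4-k}\,\|u-v\|_{L^2}^{k}.
\]
Since the right-hand side tends to $0$ as $\|u-v\|_{L^2}\to 0$ for fixed $v$, $W_B$ is continuous at $v$; as $v \in L^2(\mathbb T)$ was arbitrary, this proves the lemma. (In fact the same estimate shows that $W_B$ is Lipschitz continuous on every bounded subset of $L^2(\mathbb T)$.)

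There is no substantial obstacle here: the only point that requires a moment's attention is verifying that the $16$-term expansion is legitimate despite the conjugate-linearity of $F$ in two of its slots, and this is immediate since a conjugate-linear map, like a linear one, is additive in its argument. Everything else is a direct application of the quadrilinear bound in Lemma \ref{multi}.
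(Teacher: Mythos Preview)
Your proof is correct and is essentially the same argument as the paper's: the paper simply writes ``Take $w=0$ in Lemma~\ref{multi2},'' but Lemma~\ref{multi2} is itself proved by exactly the multilinear expansion of $|T_t(v+w+h)|^4$ together with Lemma~\ref{multi} that you carry out directly for the two-term decomposition $u=v+h$. In other words, you have unpacked the one-line citation into the underlying computation, with no substantive difference in method.
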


\begin{proof} Take  $w=0$ in Lemma \ref{multi2}.
\end{proof}

\begin {lemma}\label{lm:7} There exists $C>0$ such that for all $v, w \in L^2(\mathbb{T})$, all $\delta >0$, and all integers  $ n_0$, $r_1$, and $ r_2$, if $r_2 - r_1 \ge \delta^{-1}$,  $\hat{v}(n)=0$ for $|n-n_0| > r_1$, and $\hat{w}(n)=0$ for $|n-n_0| < r_2$, then 
 
\begin{equation}
\begin{aligned}
\label{estsforFs} 
|F(v,v,w,w)| &\le (2\pi B + C \delta ^\frac12)\|\hat{v}\|_{\ell^2}^2\;\|\hat{w}\|_{\ell^2}^2, \\ 
|F(w,v,w,v)| &\le C \|\hat{v}\|_{\ell^2}^2\;\|\hat{w}\|_{\ell^2}^2\;  \delta ^\frac12,\\  
|F(v,w,v,w)|&\le C\|\hat{v}\|_{\ell^2}^2\;\|\hat{w}\|_{\ell^2}^2\;  \delta ^\frac12,\\ 
|F(v,v,v,w)|&\le C \|\hat{v}\|_{\ell^2}^3\;\|\hat{w}\|_{\ell^2}\;  \delta ^\frac12,\\  
|F(v,v,w,v)|&\le C \|\hat{v}\|_{\ell^2}^3\;\|\hat{w}\|_{\ell^2}\;  \delta ^\frac12,\\ 
|F(v,w,w,w)|&\le C \|\hat{v}\|_{\ell^2}\;\|\hat{w}\|_{\ell^2}^3\;  \delta ^\frac12,\\ 
|F(w,w,w,v)| &\le C\|\hat{v}\|_{\ell^2}\;\|\hat{w}\|_{\ell^2}^3\;  \delta ^\frac12.\\
  \end{aligned}
\end{equation} 
\end{lemma}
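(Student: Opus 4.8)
The plan is to prove all seven estimates from a single four-function Fourier expansion, after which everything reduces to bookkeeping about which of the "frequency increments" $l,p$ is forced to be large. First I would record the polarized version of the identity \eqref{WBeqsum}: repeating the computation in the proof of Lemma \ref{Tsu}(ii) with four (possibly distinct) functions in place of $u$ — first for functions in $\mathcal D$, then extending to all of $L^2(\mathbb T)$ by the density/continuity argument used there together with Lemma \ref{multi} — one gets
\[
F(u_1,u_2,u_3,u_4)=2\pi\sum_l\sum_n\sum_p \hat u_1(n)\,\overline{\hat u_2(n-l)}\,\hat u_3(n-l-p)\,\overline{\hat u_4(n-p)}\int_0^B e^{-2ilpt}\,dt ,
\]
with the triple sum converging absolutely (compare Lemma \ref{Tsu}(i) and \eqref{integralest}). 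Since by Lemma \ref{invariant}(ii) $F$ is unchanged when all four arguments are translated by the same integer in Fourier space, I would assume $n_0=0$, so that $\hat v$ is supported in $\{|n|\le r_1\}$ and $\hat w$ in $\{|n|\ge r_2\}$, two disjoint sets.

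Next I would substitute the appropriate elements of $\{v,w\}$ into the slots and read off the structure of the surviving terms, using two facts: (a) if a slot carrying $v$ and a slot carrying $w$ occupy indices differing by $l$ (or by $p$, or by $l\pm p$), then that difference has absolute value $\ge r_2-r_1\ge\delta^{-1}$; (b) if the support conditions force one of the indices $n,\,n-l,\,n-l-p,\,n-p$ into both the support of $\hat v$ and that of $\hat w$, the term vanishes. Going through the six "mixed" forms $F(w,v,w,v),\,F(v,w,v,w),\,F(v,v,v,w),\,F(v,v,w,v),\,F(v,w,w,w),\,F(w,w,w,v)$, one finds that every surviving term has $|l|\ge\delta^{-1}$ and $|p|\ge\delta^{-1}$. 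For $F(v,v,w,w)$ only $|p|\ge\delta^{-1}$ is forced; $l$ is free, and the terms with $l=0$ survive. Since $\int_0^B e^{0}\,dt=B$, those terms contribute exactly $2\pi B\sum_n|\hat v(n)|^2\sum_p|\hat w(n-p)|^2=2\pi B\|\hat v\|_{\ell^2}^2\|\hat w\|_{\ell^2}^2$, which is the announced main term; all remaining terms have $l\ne0$.

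It then remains to bound all "error" terms, i.e.\ the surviving terms with $l\ne0$; in each of the seven cases these have $lp\ne0$ and $\max(|l|,|p|)\ge\delta^{-1}$. For this I would use $\bigl|\int_0^B e^{-2i\theta t}\,dt\bigr|\le\min(B,|\theta|^{-1})$ together with the elementary observation that if $lp\ne 0$ and $\max(|l|,|p|)\ge\delta^{-1}$ then $|lp|\ge\min(|l|,|p|)^{3/2}\delta^{-1/2}$ — because $|lp|\ge\min(|l|,|p|)^2$ and $|lp|\ge\min(|l|,|p|)\,\delta^{-1}$ — so that every error term satisfies $\bigl|\int_0^B e^{-2ilpt}\,dt\bigr|\le\delta^{1/2}\min(|l|,|p|)^{-3/2}$. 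Then I would split each error sum into the part with $|l|\le|p|$ and the part with $|l|>|p|$: in the first use the bound $\delta^{1/2}|l|^{-3/2}$, sum over $p$ and then $n$ by Cauchy–Schwarz exactly as in the proof of Lemma \ref{Tsu}(i), and be left with the convergent series $\sum_{l\ne0}|l|^{-3/2}$; the second is identical with $l$ and $p$ interchanged. This yields $|(\text{error})|\le C\delta^{1/2}\|\hat u_1\|_{\ell^2}\|\hat u_2\|_{\ell^2}\|\hat u_3\|_{\ell^2}\|\hat u_4\|_{\ell^2}$ with $C$ absolute; substituting the relevant $v$'s and $w$'s, and adding the main term for $F(v,v,w,w)$, gives the seven inequalities for $\delta<1$, while for $\delta\ge1$ they follow directly from Lemma \ref{multi}.

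The hard part will be exactly this error estimate. The naive route — bounding $\bigl|\int_0^B e^{-2ilpt}\,dt\bigr|$ by $C\delta$ for each term and then summing — loses a factor that grows like $\log r_1$ (one can make it as large as $\delta\log r_1$), which is fatal because $C$ must be independent of $r_1$ and $r_2$. Writing $|lp|^{-1}\le\delta^{1/2}\min(|l|,|p|)^{-3/2}$ is the device that keeps a genuine $\delta^{1/2}$ gain while leaving an $\ell^1$-summable kernel in whichever of $l,p$ is smaller, so that the remaining summation is uniform in $r_1,r_2$. The other place requiring care is purely combinatorial: checking, arrangement by arrangement, which increments are pinned to be $\ge\delta^{-1}$, which terms drop out by support-disjointness, and — in the unique arrangement $F(v,v,w,w)$ where $l$ is unconstrained — isolating the $l=0$ main term cleanly.
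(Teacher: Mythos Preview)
Your proposal is correct and follows essentially the same approach as the paper. Both arguments rest on the same polarized Fourier expansion of $F$, the same key device of writing $|lp|\ge \delta^{-1/2}\min(|l|,|p|)^{3/2}$ to extract a uniform $\delta^{1/2}$ gain while keeping an $\ell^1$-summable kernel in the smaller variable, and the same Cauchy--Schwarz/Young summation (your reference to Lemma~\ref{Tsu}(i) is apt). The paper's variables $(n_1+n_3,\,n_1+n_2)$ play exactly the role of your $(l,p)$, and its four-part split $(I)$--$(IV)$ matches your decomposition into the $l=0$ main term, the vanishing $p=0$ piece, and the two halves $|l|\le|p|$, $|l|>|p|$. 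Your organization --- first doing the combinatorics once to pin down which of $l,p$ is forced large in each of the seven arrangements, then running a single error estimate --- is somewhat cleaner than the paper's case-by-case treatment, but the mathematical content is identical.
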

\begin{proof} For any $u_1, u_2, u_3, u_4 \in L^2(\mathbb T)$, we have by Fubini's theorem and Parseval's theorem that
\begin{equation}
\label{sumforF}
\begin{aligned}
F(u_1,u_2,u_3,u_4) &= 2\pi\int_0^B \sum_n \mathcal F(T_t u_1\overline{T_tu_2}T_tu_3)[n]\ \overline{\widehat{u_4}}(n) \ dt\\
&=2\pi \int_0^B\sum_{n}  e^{in^2t}\left(\widehat{T_tu_1}*\widehat{\overline{T_tu_2}}*\widehat{T_tu_3}\right)[n]\ \overline{\widehat{u_4}}(n)  \ dt\\
&=2\pi \int_0^B\sum_{n} \sum_{n_1} \sum_{n_2}  e^{in^2t}\ \widehat{T_tu_1}(n-n_1-n_2)\ \widehat{\overline{T_tu_2}}(n_1)\ \widehat{T_tu_3}(n_2)\ \overline{\widehat{u_4}}(n)\ dt\\
&=2\pi \sum_{n_1} \sum_{n_2} \sum_{n_3}\ \widehat{u_1}(n_3)
\ \widehat{\overline{u_2}}(n_1)\ \widehat{u_3}(n_2)\ \overline{\widehat{u_4}}(n_1 + n_2 + n_3)\int_0^B e^{-2it(n_1+ n_3)(n_1 + n_2)} \ dt.
\end{aligned}
\end{equation} 
where all of the sums are taken over $\mathbb Z$, and in the last expression we used a new index of summation $n_3 = n-n_1-n_2$. 
Taking  $u_1=u_2=v$ and $ u_3 = u_4 = w$ in \eqref{sumforF}, we get
\begin{equation}\label{lambda1est}
|F(v,v,w,w)|
 \le
   2\pi \sum_{n_1} \sum_{n_2} \sum_{n_3}\left|\widehat{v}(n_3)\widehat{\overline{v}}(n_1)  
    \widehat{w}(n_2)\overline{\widehat{w}}(n_1 + n_2 + n_3)\int_0^B e^{-2it(n_1+ n_3)
    (n_1 + n_2)} \ dt\right|. 
     \end{equation}
     
We write the right-hand side of \eqref{lambda1est} as the sum of four parts, $(I)+(II)+(III)+(IV)$, where $(I)$ is the sum over all terms for which $|n_1+n_3|=0$; 
$(II)$ is the sum over the terms for which $|n_1+n_2|=0<|n_1+n_3|$;
$(III)$ is the sum over the terms for which $|n_1+n_2|\ge|n_1+n_3|\ge1$; 
and $(IV)$ is the sum over the terms for which $|n_1+n_3| > |n_1+n_2|\ge 1$.

 Then we have
\begin{equation}\label{II}
(I)=2\pi B \sum_{n_2} \sum_{n_3}\left|\widehat{v}(n_3)\overline{\widehat{v}}(n_3)\widehat{w}(n_2)\overline{\widehat{w}}(n_2)\right|
=2\pi B \|\widehat{v}\|_{\ell^2}^2\,\|\widehat{w}\|_{\ell^2}^2.
\end{equation}

For  $(II)$, we have
\begin{equation}
 (II) \le 2\pi B \sum_{n_2} \sum_{n_3}\left|\widehat{v}(n_3)\widehat{\overline{v}}(-n_2)\widehat{w}(n_2)\overline{\widehat{w}}(n_3)\right|
  =2\pi B \sum_{n_2}\sum_{n_3}\left|\widehat{v}(n_3)\widehat{v}(n_2)\widehat{w}(n_2)\widehat{w}(n_3)\right|
=0,
\label{I} 
\end{equation}
because the assumptions on the supports of $v$ and $w$ in Lemma \ref{lm:7}  imply that  $\widehat{v}(n_2)\widehat{w}(n_2)=0$ for all $n_2 \in \mathbb Z$.

Before obtaining estimates for $(III)$ and $(IV)$ we note that,
in light of the assumptions on the supports of $\widehat v$ and $\widehat w$, for  
 $\widehat{v}(n_3)\ \widehat{\overline{v}}(n_1)\ \widehat{w}(n_2)\ \overline{\widehat{w}}(n_1+n_2+n_3)$ to be nonzero
 we must have  $|n_3 - n_0|\le r_1$,  $|n_1 + n_0|\le r_1$, $|n_2 - n_0|\ge r_2$, 
and $|n_1+ n_2+n_3 - n_0|\ge r_2$.

To estimate $(III)$, we first observe that if $|n_1+n_2|\ge|n_1+n_3|\ge1,$ then in all nonzero terms of the sum,
 $$
 \begin{aligned}
 1+|n_1 +n_2|\,|n_1+n_3|&\ge|n_1 +n_2|\,|n_1+n_3|\\
&\ge (|n_1 +  n_2 + n_3 - n_0| - | n_3- n_0 |)^{\frac12}\,|n_1+n_3|^{\frac32} \\
&\ge (r_2- r_1)^{\frac12}\,|n_1+n_3|^{\frac32} \\
&\ge \delta^{-1/2}|n_1+n_3|^{\frac32}.
\end{aligned}
$$
Define $K_1(n)=\chi_{|n|\ge 1}\,\, |n|^{-\frac32}$, so that $\|K_1\|_{\ell^1} < \infty$, and define $K_2(n)= K_1(n)\,[(|\hat{w}(-.)|*|\overline{\widehat{w}}| )(n)]$. Using \eqref{integralest}, we can write
\begin{equation}
\label{III}
\begin{aligned}
(III)&\le C\sum_{n_1} \sum_{n_2} \sum_{n_3}\left[\frac{\chi _{|n_1+n_2|\ge|n_1+n_3|\ge1}}{1+|n_1 +n_2||n_1 + n_3|}\right] \left|\widehat{v}(n_3)\widehat{\overline{v}}(n_1)\widehat{w}(n_2)\overline{\widehat{w}}(n_1 + n_2+ n_3)\right|\\
&\le C\delta^{\frac12} \sum_{n_1} \sum_{n_3}\chi _{|n_1+n_3|\ge1}\,\,|n_1 + n_3|^{-\frac32}\left||\widehat{v}(n_3)\widehat{\overline{v}}(n_1)\right| \sum_{n_2} \left|\widehat{w}(n_2)\overline{\widehat{w}}(n_1 + n_2+ n_3)\right|\\
&= C \delta^{\frac12} \sum_{n_1} \sum_{n_3} K_1(n_1+n_3)\left|\widehat{v}(n_3)\widehat{\overline{v}}(n_1) \right|\left( \big|\widehat{w}(-.)\big| *\big|\overline{\widehat{w}}\big|\right)(n_1+n_3) \\
&= C \delta^{\frac12} \sum_{n_1} \sum_{n_3} K_2(n_1+n_3) \, \big|\widehat{v}(n_3)\big|\, \big|\widehat{\overline{v}}(n_1)\big|\\
&=C \delta^{\frac12} \sum_{n_1} \big|\overline{\widehat{v}}(-n_1)\big|\,\,  (K_2 *\big|\widehat{v}(-.)\big|)(n_1)\\
&\le  C \delta^{\frac12} \left\|\widehat{v}\right\|_{\ell^2}\left\| K_2 *\big|\widehat{v}(-.)\big|\right\|_{\ell^2}\\
&\le C \delta^{\frac12} \|\widehat{v}\|^2_{\ell^2}  \| K_2\|_{\ell^1}\\
&\le C \delta^{\frac12} \|\widehat{v}\|^2_{\ell^2} \| K_1\|_{\ell^1}  \left\| \hat{w}\right\|^2_{\ell^2}\\
&\le  C \delta^{\frac12} \| \hat{v}\|^2_{\ell^2}\|\hat{w}\|^2_{\ell^2},
\end{aligned}
\end{equation}
where Young's inequality was used in the last few estimates.  

To estimate $(IV)$, we observe that if $|n_1+n_3|>|n_1+n_2|\ge1$, then in all nonzero terms of the sum,
$$
\begin{aligned}
1+|n_1 +n_2|\,|n_1+n_3|&\ge|n_1+n_2|^2\\
&=|(n_1 +  n_2 + n_3 - n_0) - ( n_3- n_0 )|^{\frac12}\,|n_1+n_2|^{\frac32} \\
&\ge(2\delta^{-1})^{\frac12}\,|n_1+n_2|^{\frac32}.
\end{aligned}
$$
This time we let $ K_3= K_1(n)(|\hat{v}(-.)|*|\overline{\widehat{w}}| )(n)$ with $K_1$ as previously defined, and we follow a similar argument as the one used to estimate $(III)$ to obtain
 \begin{equation}
 \label{IV}
 \begin{aligned}
 (IV)&\le2\pi C \sum_{n_1} \sum_{n_2} \sum_{n_3}\left[\frac{\chi _{|n_1+n_3|\ge|n_1+n_2|\ge1}}{1+|n_1 +n_2||n_1 + n_3|}\right] \left|\widehat{v}(n_3)\widehat{\overline{v}}(n_1)\widehat{w}(n_2)\overline{\widehat{w}}(n_1 + n_2+ n_3)\right|\\
&\le  C  \delta^{\frac12} \| \hat{v}\|^2_{\ell^2}\|\hat{w}\|^2_{\ell^2}.
\end{aligned}
\end{equation}
Taking  the sum of the estimates in  \eqref {II}, \eqref{I}, \eqref {III} and \eqref{IV} now gives the desired estimate for $F(v,v,w,w)$.

To  illustrate the proofs of the remaining estimates in \eqref{estsforFs}, consider for example the estimate for $F(v,v,w,v)$.   Taking $u_1=u_2=u_4=v$ and $ u_3 = w$ in \eqref{sumforF}, we get
\begin{equation}
\begin{aligned}
\label{lambda5est}
\left|F(v,v,w,v)\right| &\le
2\pi \sum_{n_1} \sum_{n_2} \sum_{n_3}\left|\widehat{v}(n_3)\widehat{\overline{v}}(n_1)\widehat{w}(n_2)\overline{\widehat{v}}(n_1 + n_2 + n_3)\int_0^B e^{-2it(n_1+ n_3)(n_1 + n_2)} \ dt\right|\\
 &= (I)+ (II)+(III)+(IV),
 \end{aligned}
 \end{equation}
  where  $(I)$ to $(IV)$ are defined in the same way as in the paragraph following \eqref{lambda1est}.  
 The sums $(III)$ and $(IV)$ in \eqref{lambda5est} can be estimated in the same way as the analogous sums in \eqref{lambda1est},
 and the same argument used to prove \eqref{I} shows that $(II)=0$ here as well.  In contrast to \eqref{II}, however, here we find that
$(I)=0$.   Indeed, we can write
$$ (I)=2\pi B \sum_{n_2} \sum_{n_3}\left|\widehat{v}(n_3)\overline{\widehat{v}}(n_3)\widehat{w}(n_2)\overline{\widehat{v}}(n_2)\right|.$$
Because of our assumptions on the supports of $\widehat v$ and $\widehat w$, we have
$\widehat v(n_2)\widehat w(n_2)=0$ for all $n_2 \in \mathbb Z$; and therefore $(I)=0$.   It follows that the desired estimate holds for $F(v,v,w,v)$.

The proofs of the remaining estimates in  \eqref{estsforFs} proceed in the same way as the proof of the estimate for $F(v,v,w,v)$.
\end{proof}

\begin{lemma}\label{Dconvg}
Let  $\{u_j\}_{j\in \mathbb N}\subset L^2(\mathbb{T})$ be a sequence such that $\|u_j\|^2_{L^2} = 1$ for all $j \in \mathbb N$.  Suppose that the sequence $\{\widehat{u_j}\}$ in $\ell^2(\mathbb Z)$  vanishes in the sense of Lemma \ref{conccomp}. Then $D_B(u_j)\to 0$ as $j \to \infty.$
\end{lemma}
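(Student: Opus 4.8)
The plan is to rewrite $D_B(u_j)$ so that the oscillatory factor $\int_0^B e^{-2ilpt}\,dt$ turns into a periodicity statement in $t$, which exposes the cancellation it carries. Let $b(l,t)=\sum_n\widehat{u_j}(n)\,\overline{\widehat{u_j}}(n-l)\,e^{-il(2n-l)t}$ as in the proof of Lemma \ref{Tsu}; this series converges absolutely since $\sum_n|\widehat{u_j}(n)\,\widehat{u_j}(n-l)|\le\|\widehat{u_j}\|_{\ell^2}^2$, and $b(l,t)$ is the $l$-th Fourier coefficient in $x$ of $|T_tu_j(x)|^2$, so $\sum_l|b(l,t)|^2=\frac{1}{2\pi}\int_{\mathbb T}|T_tu_j(x)|^4\,dx$ for a.e.\ $t$ and hence $W_B(u_j)=2\pi\int_0^B\sum_l|b(l,t)|^2\,dt$. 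Writing $h_j(l)=\sum_n|\widehat{u_j}(n)|^2\,|\widehat{u_j}(n-l)|^2$, the $p=0$ part of the $l\ne0$ block of the expansion of $W_B$ is $2\pi B\sum_{l\ne0}h_j(l)$, so, by exactly the four-way splitting used in the proof of part (iii) of Lemma \ref{Tsu},
$$
D_B(u_j)=2\pi\sum_{l\ne0}\left(\int_0^B|b(l,t)|^2\,dt-B\,h_j(l)\right),
$$
the series converging absolutely by virtue of the estimate established in the next step.

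The key observation is that for each fixed $l\ne0$ the function $t\mapsto|b(l,t)|^2$ is periodic of period $\pi/|l|$, and, because the exponentials $\{e^{-il(2n-l)t}\}_{n\in\mathbb Z}$ are mutually orthogonal on $[0,\pi/|l|]$, Parseval's identity on that interval gives $\int_0^{\pi/|l|}|b(l,t)|^2\,dt=(\pi/|l|)\,h_j(l)$; that is, the mean of $|b(l,t)|^2$ over one period is exactly $h_j(l)$. Writing $B=m\,(\pi/|l|)+r$ with $m=\lfloor|l|B/\pi\rfloor$ and $0\le r<\pi/|l|$ and using periodicity, one gets $\int_0^B|b(l,t)|^2\,dt-B\,h_j(l)=\int_0^r|b(l,t)|^2\,dt-r\,h_j(l)$; since both terms on the right lie in $[0,(\pi/|l|)\,h_j(l)]$, it follows that $\bigl|\int_0^B|b(l,t)|^2\,dt-B\,h_j(l)\bigr|\le\pi\,h_j(l)/|l|$, and hence
$$
|D_B(u_j)|\le 2\pi^2\sum_{l\ne0}\frac{h_j(l)}{|l|}.
$$

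It then remains to prove $\sum_{l\ne0}h_j(l)/|l|\to0$. Here I would use two facts: $\sum_{l\ne0}h_j(l)=\|\widehat{u_j}\|_{\ell^2}^4-\|\widehat{u_j}\|_{\ell^4}^4\le(2\pi)^{-2}$, uniformly in $j$ (since $\sum_l h_j(l)=\|\widehat{u_j}\|_{\ell^2}^4$ and $\|\widehat{u_j}\|_{\ell^2}^2=(2\pi)^{-1}$), and $h_j(l)\le\|\widehat{u_j}\|_{\ell^\infty}^2\,\|\widehat{u_j}\|_{\ell^2}^2$, which tends to $0$ uniformly in $l$ because vanishing of $\{\widehat{u_j}\}$ forces $\|\widehat{u_j}\|_{\ell^\infty}\to0$ (as observed in the proof of Lemma \ref{l4convg}). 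Given $\epsilon>0$, choose $R$ with $(4\pi^2R)^{-1}<\epsilon/2$, so that $\sum_{|l|>R}h_j(l)/|l|\le R^{-1}\sum_{l\ne0}h_j(l)<\epsilon/2$ for every $j$; then $\sum_{0<|l|\le R}h_j(l)/|l|\le(\sup_l h_j(l))\sum_{0<|l|\le R}|l|^{-1}<\epsilon/2$ for all sufficiently large $j$. This gives $D_B(u_j)\to0$.

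The step I expect to be the main obstacle is the reorganization itself: recognizing that $D_B$ records nothing more than the contribution of the incomplete final period in the time integral of $|b(l,t)|^2$, which is therefore controlled by a single period and so by $\pi\,h_j(l)/|l|$. A naive term-by-term absolute-value bound on the triple sum defining $D_B$ does not tend to $0$ in general (as one sees by taking $\widehat{u_j}$ constant on a long block of consecutive integers), so the cancellation hidden in $\int_0^B e^{-2ilpt}\,dt$, here converted into periodicity in $t$ with period $\pi/|l|$, must genuinely be exploited.
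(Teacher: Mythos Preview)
Your argument is correct, and it takes a genuinely different route from the paper's proof. The paper bounds $|D_B(u_j)|$ term by term using the pointwise oscillatory estimate $\bigl|\int_0^B e^{-2ilpt}\,dt\bigr|\lesssim 1/(1+|lp|)$, then splits the resulting triple sum according to whether $|l|,|p|\le\beta$ or one of them exceeds $\beta$; the inner block is controlled by the vanishing quantity $\sup_m\sum_{|n-m|\le\beta}|\widehat{u_j}(n)|^2$, the outer pieces by $\beta^{-1/2}\|\widehat{u_j}\|_{\ell^2}^4$ via Young's inequality, and $\beta$ is optimized at the end. You instead resum in $p$ first, observe that $|b(l,t)|^2$ is $(\pi/|l|)$-periodic with mean $h_j(l)$, and thereby collapse the whole estimate to a single sum $\sum_{l\ne0}h_j(l)/|l|$; this is both shorter and more transparent, since it identifies $D_B$ as the contribution of an incomplete final period in the $t$-integration. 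The paper's approach, on the other hand, stays closer to the structure used throughout (the $G_B$-type bounds of Lemma~\ref{Tsu}) and avoids the need to revisit the $b(l,t)$ representation. Your closing remark is right that the cancellation in $\int_0^B e^{-2ilpt}\,dt$ must be used, but note that the paper does use it too, just at the level of individual terms via the $1/(1+|lp|)$ decay rather than through exact periodicity.
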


\begin{proof}  Let $\beta \ge1.$ From the proof of  Lemma \ref{Tsu}, we see that there exists $C$ depending only on $B$ such that
$$
|D_B(\widehat{u_j})|\le C \sum_{l\neq0} \sum_{n}\sum_{p\neq 0} \frac 1{1+ |lp|}\left|\widehat{u_j} (n)\widehat{u_j}(n-l)\widehat{u_j}(n-p)\widehat {u_j}(n-p-l)\right|\\
$$
We can decompose the triple sum on the right-hand side into three parts, writing it as $(I)+ (II)+(III)$,  where $(I)$ is the sum over all $(l,n,p)$ such that $1\le |l|\le \beta$ and $1\le |p|\le \beta$,  
$(II)$ is the sum over all $(l,n,p)$  such that $|p|>|l|\ge 1$ and $|p|>\beta$, and $(III)$ is the sum over all $(l,n,p)$ such that $|l|>\beta$ and $|l| \ge |p|\ge 1$.

 To estimate $(I)$, we write 
 $$ 
 \begin{aligned}
(I)  &\le   \sum_n|\widehat{u_j} (n)|\sum_{l=-\beta}^\beta\left| \widehat{u_j}(n-l)\right| \sum^{\beta}_{p=-\beta}\left|\widehat{u_j}(n-p)\widehat {u_j}(n-p-l)\right|\\
 &\le   \sum_n|\widehat{u_j} (n)|\sum_{l=-\beta}^\beta\left| \widehat{u_j}(n-l)\right| \left(\sum^{\beta}_{p=-\beta}\left|\widehat{u_j}(n-p)\right|^2\right)^{1/2}\|\widehat{u_j}\|_{\ell^2}\\
 &\le \|\widehat{u_j}\|_{\ell^2} \sup_{m \in \mathbb Z} \left(\sum^{m+\beta}_{r=m-\beta}|{\widehat {u_j}}(r)|^2\right)^{1/2} \sum_n| \widehat{u_j} (n)|\sum_{l=-\beta}^\beta|\widehat{u_j}(n-l)|  \\
&\le \|\widehat{u_j}\|_{\ell^2} \sup_{m \in \mathbb Z} \left(\sum^{m+\beta}_{r=m-\beta}|{\widehat {u_j}}(r)|^2\right)^{1/2}\,\|\widehat{u_j}\|_{\ell^2} \|\chi_{[-\beta, \beta]}*|\widehat{u_j}|\|_{\ell^2} \\
&\le 2 \beta \|\widehat{u_j}\|^3_{\ell^2} \sup_{m \in \mathbb Z} \left(\sum^{m+\beta}_{r=m-\beta}|\widehat {u_j}(r)|^2\right)^{1/2}.
\end{aligned}$$

To estimate $(II)$ we observe that for all $(l,n,p)$ which appear in that sum,
$$
1+|lp|> |l|^\frac32 |p|^\frac12> \beta^{\frac12}|l|^\frac32.
$$
We write
$$
\begin{aligned}
(II)&= \sum_l \sum_n \sum_p\frac{\chi _{\{|p|>|l|\ge1\} }(l,p)}{1+ |lp|} |\widehat{u_j} (n)\widehat{u_j}(n-l)\widehat{u_j}(n-p)\widehat {u_j}(n-p-l)|\\
 &\le \beta^{-\frac12} \sum_n |\widehat{u_j} (n)| \sum_l  \chi _{|l|\ge1}(l)|l|^{-\frac32} |\widehat {u_j}(n-l)|\sum_p|\widehat{u_j}(n-p)\widehat{u_j}(n-p-l)|\\
  &\le \beta^{-\frac12}\|\widehat{u_j}\|_{\ell^2}^2 \sum_n |\widehat{u_j} (n)| \sum_l  \chi _{|l|\ge1}(l)|l|^{-\frac32}|\widehat {u_j}(n-l)| .\\
\end{aligned}
$$
Therefore, if we define $K_1(l)=\chi _{|l|\ge1}(l)|l|^{-\frac32}$ and apply Young's convolution inequality, we obtain that 
$$
 (II)\le   \beta^{-\frac12} \| \widehat{u_j}\|^3_{\ell^2}  \left\| K_1 *\left|\widehat{u_j}\right|\right\|_{\ell^2}
 \le \beta^{-\frac12} \left\|\widehat{u_j}\right\|^3_{\ell^2}  \| K_1\|_{\ell^1}\left\| \widehat{u_j}\right\|_{\ell^2}
\le  C\beta^{-\frac12} \left\| \widehat{u_j}\right\|^4_{\ell^2}.
$$
 
In estimating $(III)$ we can use that
$$
1+|lp|> |l|^\frac12|p|^\frac32 > \beta^{\frac12}|p|^\frac32.
$$
We write
$$
\begin{aligned}(III)&=\sum_l \sum_n \sum_p\frac{\chi _{\{|l|\ge|p|\ge1\}}(l,p)}{1+ |lp|} |\widehat{u_j} (n)\widehat{u_j}(n-l)\widehat{u_j}(n-p)\widehat {u_j}(n-p-l)|\\
&\le \beta^{-\frac12}\sum_n |\widehat{u_j} (n)|    \sum_p\chi _{\{|p|\ge 1\}}(p)|p|^{-\frac32}| \widehat{u_j}(n-p)|\sum_l |\widehat{u_j}(n-l)\widehat {u_j}(n-p-l)|\\
&\le \beta^{-\frac12}\|\widehat{u_j}\|_{\ell^2}^2 \sum_n |\widehat{u_j} (n)| \sum_p  \chi _{\{|p|\ge 1\}}(p)|p|^{-\frac32} |\widehat {u_j}(n-p)| \\
&=\beta^{-\frac12}\|\widehat{u_j}\|_{\ell^2}^2 \sum_n |\widehat{u_j} (n)| (K_1 \ast|\widehat{u_j}|)(n)\\ 
 &\le C \beta^{-\frac12} \left\| \widehat{u_j}\right\|^4_{\ell^2},
\end{aligned}$$
where again we applied Young's convolution inequality in the last step.
 
Combining the above estimates for $(I)$, $(II)$, and $(III)$, we obtain that
$$
|D_B(\widehat{u_j})| \le 
C \beta^{-\frac12} \left\| \widehat{u_j}\right\|^4_{\ell^2}  
+ C \beta \|\widehat{u_j}\|^3_{\ell^2} \sup_{m \in \mathbb Z} \left(\sum^{m+\beta}_{r=m-\beta}|\widehat {u_j}(r)|^2\right)^{1/2}.
$$
Now, if $\{\widehat{u_j}\}$ vanishes, then for each fixed $\beta \ge 1$ and $\epsilon >0$ there exists $N\in \mathbb{N}$ such that for all $j\ge N$,
$$ \sup_{m \in \mathbb Z}\sum^{m+\beta}_{r=m-\beta}|\widehat {u_j}(r)|^2 < \epsilon^6.$$
In particular, for $\beta = \epsilon^{-2}$, there exists $N$ such that for all $j\ge N$, 
$$
|D_B(\widehat{u_j})|\le C \epsilon^{1/2} \left\| \widehat{u_j}\right\|^4_{\ell^2}  + C \epsilon^{-2} {(\epsilon^6)}^{\frac12}\|\widehat{u_j}\|^3_{\ell^2}     \le C \epsilon.$$
This shows that  
$\lim _{j\to \infty} D_B(\widehat{u_j}) = 0$.
\end{proof}

\section {Proof of Theorem \ref{mainthm}}
\label{sec:proofmainthm}

We first prove part (i) of the Theorem, which is the main part.

Fix $B>0$, and suppose $J_{B,1}> B/\pi$.   Let $\{u_j\}_{j \in \mathbb N}$ be a maximizing sequence in $L^2(\mathbb T)$ for $J_{B,1}$, so that $\|u_j\|_{L^2}=1$ for all $j \in \mathbb N$ and $\lim_{j \to \infty}W_B(u_j)=J_{B,1}$.  We have that $\|\widehat{u_j}\|_{\ell^2}^2=1/(2\pi)$ for all $j \in \mathbb N$, so Lemma \ref{conccomp} applies with $M=1/(2\pi)$, and asserts that there are three types of behavior that the sequence $\{\widehat{u_j}\}$ could exhibit.   We claim that in the present situation, vanishing and splitting do not occur, so that only tightness is possible. 

We suppose first, for the sake of contradiction, that the sequence $\{\widehat{u_j}\}$  is vanishing. Then from \eqref{WandD} we have that
\begin{equation}
\label{wuj}
W(u_j) = 4\pi B \|\widehat{u_j}\|^4_{\ell^2}- 2\pi B \|\widehat{u_j}\|^4_{\ell^4}+ D_B(u_j)
\end{equation}
for all $j \in \mathbb N$.    On the other hand, from Lemmas \ref{l4convg} and \ref{Dconvg} we have that $\|\widehat{u_j}\|_{\ell^4} \to 0$ and $D_B(u_j) \to 0$ as $j \to \infty$.
Therefore, taking $j \to \infty$ in \eqref{wuj}, we get that $J_{B,1} = B/\pi$, contradicting the assumption that $J_{B,1} > B/\pi$.   Thus $\{\widehat{u_j}\}$ cannot vanish. 

Next suppose, again for contradiction, that $\{\widehat{u_j}\}$ exhibits splitting.  
Let $a_j(n)=\widehat {u_j}(n)$ for $n \in \mathbb N$, fix $\delta >0$, and for this $\delta$ define $\alpha \in (0,1/(2\pi))$ and for each $j \in \mathbb N$ define sequences $\{b_j(n)\}_{n \in \mathbb N}$ and $\{c_j(n)\}_{n \in \mathbb N}$ as in alternative 2 of Lemma \ref{conccomp}.   For each $j \in \mathbb N$, let $v_j, w_j \in L^2(\mathbb T)$ be such that
$\widehat{v_j}(n)=b_j(n)$ and $\widehat{w_j}(n)=c_j(n)$ for all $n \in \mathbb N$.  From Lemmas \ref{multi2} and \ref{lm:7} we have that, for all $j \in \mathbb N$,
$$
\left|W_B\left(u_j\right) - W_B\left(v_j\right) - W_B\left(w_j\right)\right| \le 8\pi B\|\hat{v}_j\|_{\ell^2}^2\|\hat{w}_j\|_{\ell^2}^2 
+C\delta ^\frac12,
 $$
 where $C$ is independent of $\delta$ and $j$.  Therefore
 $$
 \begin{aligned}
 W(u_j)&\le W(v_j)+W(w_j) + 8\pi B\|\widehat{v_j}\|_{\ell^2}^2\;\|\widehat{w_j}\|_{\ell^2}^2+C \delta^{\frac12}\\
&\le \|v_j\|^4_{L^2} J_{B,1} +\|w_j\|^4_{L^2} J_{B,1}+8\pi B \|\widehat{v_j}\|_{\ell^2}^2\|\widehat{w_j}\|_{\ell^2}^2 +C \delta^{\frac12}\\
&=4\pi^2\|\widehat{v_j}\|_{\ell^2}^4 J_{B,1} +4\pi^2\|\widehat{w_j}\|_{\ell^2}^4 J_{B,1}) + 8\pi B \|\widehat{v_j}\|_{\ell^2}^2 \|\widehat{w_j}\|_{\ell^2}^2+  C \delta^{\frac12}.
\end{aligned}
$$
Recalling that $\|\widehat{v_j}\|_{\ell^2}^2 \le \alpha + \delta$ and $\|\widehat{w_j}\|_{\ell^2}^2 \le (M-\alpha)+\delta = (1/(2\pi)-\alpha)+\delta$, we obtain that
\begin{equation}
\label{wujlesomething}
\begin{aligned}
W(u_j)
&\le 4\pi^2\left(\alpha^2 +\left(\frac{1}{2\pi} - \alpha\right)^2\right) J_{B,1}+ 8\pi B \alpha\left(\frac{1}{2\pi}- \alpha\right)+C\delta^\frac12 +C\delta+C\delta^2.\\  
\end{aligned}
\end{equation}
 
Taking the limit as $j\to \infty$ followed by the limit as $\delta\to 0$ in \eqref{wujlesomething} results in 
\begin{equation}
\label{ineqjb}
J_{B,1}\le 4\pi^2\left(\alpha^2 +\left(\frac{1}{2\pi} - \alpha\right)^2\right) J_{B,1}+ 8\pi B \alpha\left(\frac{1}{2\pi} - \alpha\right).
\end{equation}
But since $0 < \alpha < \frac{1}{2\pi}$, the inequality \eqref{ineqjb} implies that $J_{B,1} \le B/\pi$, again contradicting the assumption that $J_{B,1} > B/\pi$.   Therefore $\{\widehat{u_j}\}$ can not split, either.

By Lemma \ref{conccomp}, the only remaining possibility for $\{\widehat{u_j}\}$ is that one of its subsequences, when suitably translated, is tight.  In other words, denoting this subsequence again by $\{\widehat{u_j}\}$, we  can  assert the existence of integers $m_1,m_2, m_3,...$ such that for each $\epsilon>0$, there exists an integer $ r=r(\epsilon)>0$ with the property that 
\begin{equation}
\label{ujtight}
\sum_{n=m_j -r}^{m_j +r} |\widehat{u_j}(n)|^2 \ge \frac{1}{2\pi} -\epsilon
\end{equation}
for all  $j\in \mathbb{N}$.

Define $v_j(x) = e^{-im_jx} u_j(x)$ for $j \in \mathbb N$, so that $\hat v_j(n) = \hat u_j(n+m_j)$ for all $n \in \mathbb Z$.   By Lemma \ref{invariant} $\{v_j\}$ is also a maximizing sequence for $J_{B,1}$.
Also, from \eqref{ujtight} we have that for each $\epsilon>0,$ there exists an integer $ r>0$ with the property that for all $ j\in \mathbb{N},$
$$\sum_{n=-r}^{r} |\widehat{v_j}(n)|^2 \ge \frac{1}{2\pi} -\epsilon.$$

Since the sequence $\{v_j\}_{j \in \mathbb N}$ is bounded in $L^2(\mathbb T)$, with $\|v_j\|_{L^2}=1$ for all $j$, there exists a subsequence, still denoted by $\{v_j\},$ that converges weakly to some function $u_0\in L^2(\mathbb T)$ with $\|u_0\|_{L^2} \le 1$. 

We claim that in fact $\|u_0\|_{L^2} = 1$.   To prove this, we start by fixing an arbitrary $k \in \mathbb N$.  Let $\epsilon_k= \frac1k$  and  choose $r_k=r(\epsilon_k)= r(\frac1k)$.  We define 
 $\mu_k: \mathbb Z \to \{0,1\}$ by setting  $\mu_k(n)=1$ for $|n| \le r_k$ and $\mu_r(n)=0$ for $|n| > r_k$; and then define the low- and high-frequency components $v^{(l)}_{j,k}$ and $v^{(h)}_{j,k}$  of $v_j$ by setting
  $$
  \mathcal F\left(v^{(l)}_{j,k}\right)[n] = \mu_k(n)\widehat{v_j}(n)
  $$ 
  and
  $$
  \mathcal F\left(v^{(h)}_{j,k}\right)[n] = \left(1-\mu_k(n)\right)\widehat{v_j}(n)
  $$  
  for all $n \in \mathbb Z$.  
  
  We then have
\begin{equation} 
\label{lowestimate}
\|v^{(l)}_{j,k}\|^2_{ H^1}= \sum_n(1+ |n|^2)|\mu_k(n)\widehat{v}_j(n)|^2
\le (1+4r_k^2)\| \widehat{v_j}\|_{\ell^2}^2= \frac{1}{2\pi}\left(1+4r_k^2\right)
 \end{equation} 
 and
\begin{equation}\label{highestimate} 
\|v^{(h)}_{j,k}\|^2_{L^2} =2\pi\sum_{|n|> r_k}|\widehat{v}_j(n)|^2
=2\pi\left(\frac{1}{2\pi}- \sum_{|n|\le r_k}|\widehat{v}_j(n)|^2\right)
\le2\pi\epsilon_k. 
\end{equation}

   Siince \eqref{lowestimate} bounds $\{v^{(l)}_{j,k}\}$ in $H^1$ norm and \eqref{highestimate} bounds $\{v^{(h)}_{j,k}\}$ in $L^2$ norm, we can assume (by passing to subsequences if necessary) that $\{v^{(l)}_{j,k}\}_{j \in \mathbb N}$ converges weakly in $H^1(\mathbb T)$ to some limit $u^{(l)}_k \in H^1(\mathbb T)$, and $\{v^{(h)}_{j,k}\}_{j \in \mathbb N}$ converges weakly in $L^2$ to some limit $u^{(h)}_k \in L^2(\mathbb T)$ with $\|u^{(h)}_k\|_{L^2} \le \sqrt{2\pi \epsilon_k}$.   We must then have $u_0 = u^{(l)}_k + u^{(h)}_k$.

By Rellich's Lemma, the inclusion of $H^1(\mathbb T)$ into $L^2(\mathbb T)$ is compact.   Therefore, again by passing to a subsequence, we can assume that $\{v^{(l)}_{j,k}\}_{j \in \mathbb N}$ converges strongly in $L^2(\mathbb T)$ to $u^{(l)}_k$.   Hence
$$ 
\begin{aligned}\|u_0\|_{L^2 (\mathbb{T})} &=\|u^{(l)}_k +u^{(h)}_k\|_{L^2 (\mathbb{T})}\\
&\ge\|u^{(l)}_k\|_{L^2 (\mathbb{T})} -\|u^{(h)}_k\|_{L^2 (\mathbb{T})}\\ 
&\ge \lim_{j \to \infty}\|v^{(l)}_{j,k}\|_{L^2 (\mathbb{T})} -\sqrt{2\pi\epsilon_k}\\  
&\ge \liminf_{j \to \infty}\left[\|v_{j}\|_{L^2 (\mathbb{T})}- \|v^{(h)}_{j,k}\|_{L^2 (\mathbb{T})}\right] -\sqrt{2\pi\epsilon_k}\\
&\ge \lim_{j \to \infty}\|v_j\|_{L^2 (\mathbb{T})} -2\sqrt{2\pi\epsilon_k}\\
&=1-2\sqrt{2\pi\epsilon_k}.
\end{aligned}
$$
 
We have thus proved that $\|u_0\|_{L^2} \ge 1 - 2\sqrt{2\pi \epsilon_k}$ for every $k \in \mathbb N$, and so we have shown that $\|u_0\|_{L^2} = 1 = \lim_{j \to \infty} \|v_j\|_{L^2}$.   This is enough to conclude that $\{v_j\}$ converges to $u_0$ not only weakly, but also in the norm of $L^2(\mathbb T)$.  Since, as noted in Lemma \ref{WBcont}, the map $W_B$ is continuous on $L^2(\mathbb T)$, it follows that $u_0$ is a maximizer for $J_{B,1}$.  This completes the proof of part (i) of Theorem \ref{mainthm}.

To prove part (ii) of the Theorem, let $\{u_j\}_{j \in \mathbb N}$ be any sequence such that $\|u_j\|_{L^2(\mathbb T)}=\sqrt{2\pi}\|\widehat{u_j}\|_{\ell^2}=1$ for all $j \in \mathbb N$ and $\{\widehat{ u_j}\}$ vanishes, in the sense of Lemma \ref{conccomp}.   For example, we could define $u_j$ by requiring that
$$
\widehat{u_j}(n)= \begin{cases} \frac{1}{\sqrt{2\pi(2j+1)}} \ &\text{for $|j| \le n$}\\
                                0 &\text{for $|j| > n$}.
                                \end{cases}
$$ 
Since $\{\widehat{u_j}\}$ vanishes, it follows from Lemmas \ref{l4convg} and \ref{Dconvg} and equation \eqref{WandD} that 
\begin{equation}
\label{WBeqBpi}
\lim_{j \to \infty} W_B(u_j)=B/\pi,
\end{equation}
 and therefore we must have $J_{B,1} \ge B/\pi$.

For part (iii) of the Theorem, assume that $J_{B,1}=B/\pi$, and take $\{u_j\}$ to be any sequence such that $\|u_j\|_{L^2(\mathbb T)}=1$ for all $j \in \mathbb N$ and $\{\widehat{ u_j}\}$ vanishes.  As in the preceding paragraph, we have that \eqref{WBeqBpi} holds, which means that $\{u_j\}$ is a maximizing sequence.  However, since $\{\widehat {u_j}\}$ vanishes, then by the remark made above in the paragraph following Lemma \ref{conccomp}, it is impossible for there to exist a subsequence $\{u_{j_k}\}$ and a sequence of integers $\{m_k\}$ such that $\{\widehat {u_{j_k}}(\cdot - m_k)\}$ converges strongly in $\ell^2(\mathbb Z)$.
This then proves part (iii).

\section{Existence of maximizers}
\label{sec:existence}

In this section we give results on the set of values of $B>0$ for which maximizers for $J_{B,1}$ exist in $L^2(\mathbb T)$.  

For what follows, it will be useful to define the map $A_B:L^2(\mathbb T) \to \mathbb R$ by
$$
A_B(u)=\frac{D_B(u)}{2\pi B}-\|\hat u\|_{\ell^4}^4,
$$
where $D_B$ is defined in \eqref{defDB}.  
We have the following corollary of Theorem \ref{mainthm}.

\begin{corollary}  
\label{ABcriterion}
Let $B>0$ be given. 
\item{(i)}  Suppose  there exists some $w \in L^2(\mathbb T)$ such that $A_B(w) > 0$.    Then $J_{B,1} > B/\pi$, and there exists a maximizer for $J_{B,1}$ in $L^2(\mathbb T)$.
\item{(ii)}  If, on the other hand, one has that $A_B(u)<0$ for all  $u \in L^2(\mathbb T)$, then $J_{B,1} = B/\pi$, and there do not exist any maximizers for $J_{B,1}$ in $L^2(\mathbb T)$.
\end{corollary}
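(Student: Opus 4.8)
The plan is to rewrite $W_B$ on the constraint sphere entirely in terms of $A_B$, using the Fourier decomposition from Lemma~\ref{Tsu}(iii), and then read off both parts directly from Theorem~\ref{mainthm}. Substituting the definition $A_B(u)=D_B(u)/(2\pi B)-\|\hat u\|_{\ell^4}^4$ into \eqref{WandD} gives, for every $u\in L^2(\mathbb T)$,
$$
W_B(u)=4\pi B\,\|\hat u\|_{\ell^2}^4+2\pi B\,A_B(u).
$$
If in addition $\|u\|_{L^2}=1$, then Parseval's theorem gives $\|\hat u\|_{\ell^2}^2=1/(2\pi)$, so this reduces to
$$
W_B(u)=\frac{B}{\pi}+2\pi B\,A_B(u)\qquad\text{whenever }\|u\|_{L^2}=1.
$$
I will also record that $A_B$ is homogeneous of degree $4$ (both $D_B(u)$ and $\|\hat u\|_{\ell^4}^4$ are quartic in $\hat u$), so that the sign of $A_B(u)$ is unchanged under multiplication of $u$ by a positive scalar, and $A_B(0)=0$.

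For part (i): given $w$ with $A_B(w)>0$, necessarily $w\neq0$, and after replacing $w$ by $w/\|w\|_{L^2}$ (which by homogeneity still satisfies $A_B(w)>0$) we may assume $\|w\|_{L^2}=1$. The displayed identity then gives $W_B(w)>B/\pi$, hence $J_{B,1}\ge W_B(w)>B/\pi$, and Theorem~\ref{mainthm}(i) produces a maximizer for $J_{B,1}$.

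For part (ii): if $A_B(u)<0$ for all $u\neq0$, the identity shows $W_B(u)<B/\pi$ for every $u$ with $\|u\|_{L^2}=1$, so $J_{B,1}\le B/\pi$; combined with Theorem~\ref{mainthm}(ii) this forces $J_{B,1}=B/\pi$. A maximizer $u_0$ would then satisfy $\|u_0\|_{L^2}=1$ and $W_B(u_0)=B/\pi$, which by the identity means $A_B(u_0)=0$ --- impossible since $u_0\neq0$. Hence no maximizer exists.

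There is no real obstacle here: the content is entirely in the algebraic identity above, which merely repackages Lemma~\ref{Tsu}(iii), after which both implications are immediate consequences of Theorem~\ref{mainthm}. The only point needing a word of care is the degenerate case $u=0$ (where $A_B$ vanishes), which is harmless since every candidate maximizer has $L^2$-norm equal to $1$.
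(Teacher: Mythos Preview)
Your proof is correct and follows essentially the same route as the paper's: both arguments use homogeneity to normalize to $\|w\|_{L^2}=1$, rewrite \eqref{WandD} to see that $W_B(u)>B/\pi$ exactly when $A_B(u)>0$ on the unit sphere, and then invoke the two parts of Theorem~\ref{mainthm}. Your explicit identity $W_B(u)=B/\pi+2\pi B\,A_B(u)$ on the sphere and your remark about the degenerate case $u=0$ make the logic slightly cleaner than the paper's presentation, but the substance is identical.
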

 
\begin{proof} By Theorem \ref{mainthm}, to prove part (i) it is enough to show that $J_{B,1} > B/\pi$ holds if and only if there exists $w \in L^2(\mathbb T)$ such that $A_B(w)>0$.   Indeed, because $D(\lambda w)=\lambda^4 w$ for all $\lambda > 0$ and all $w \in L^2(\mathbb T)$, we have that $A_B(w)>0$ for some $w \in L^2(\mathbb T)$ if and only if $A_B(w)>0$ for some $w \in L^2(\mathbb T)$ with $\|w\|_{L^2}=1$. By \eqref{WandD}, this  is equivalent to saying that $W_B(w)> B/\pi$ for some $w$ with $\|w\|_{L^2}=1$.   This in turn is clearly equivalent to the assertion that  $J_{B,1} >B/\pi$.

To prove part (ii), note that if $A_B(u)<0$ for all $u \in L^2(\mathbb T)$, then from \eqref{WandD} it follows that $W_B(u) < B/\pi$ for all $u \in L^2(\mathbb T)$ such that $\|u\|_{L^2}=1$.   In particular, $J_{B,1} \le B/\pi$.   On the other hand, from part (ii) of Theorem \ref{mainthm} we have that $J_{B,1} \ge B/\pi$.   Therefore, we must have $J_{B,1} = B/\pi$, and moreover there cannot exist any $u_0 \in L^2(\mathbb T)$ such that $\|u_0\|_{L^2}=1$ and $W_B(u_0)=J_{B,1}$.
\end{proof}

For $u \in L^2(\mathbb T)$ and $p, l \in \mathbb Z$, define
\begin{equation}
\label{defapl}
a_{p,l}(u)=\sum_{n \in \mathbb N} \hat u (n)\bar{\hat u}(n-l) \bar{\hat u}(n-p)\hat{u}(n-p-l)
\end{equation}
and 
\begin{equation}
\label{defbpl}
b_{p,l}=\frac{1}{B}\int_0^B e^{-2ilpt}\ dt,
\end{equation}
so that from \eqref{defDB} we have
\begin{equation}
D_B(u)=2\pi B\sum_{l \ne 0} \sum_{p \ne 0} a_{p,l}(u) b_{p,l}.
\label{DBrewrite}
\end{equation}

\begin{lemma}  For all $B>0$ and $u \in L^2(\mathbb T)$,
\begin{equation}
A_B(u)= 4 \Re \left( \sum_{p=1}^\infty a_{p,p}(u) b_{p,p} + 2 \sum_{p=1}^\infty \sum_{l=1}^{p-1} a_{p,l}(u) b_{p,l}\right)-a_{0,0}(u),
\label{DBsimp}
\end{equation}
where $\Re z$ denotes the real part of the complex number $z$.

In particular, if the Fourier coefficients $\hat u(n)$  are real-valued for all $n \in \mathbb Z$, we have  
\begin{equation}
A_B(u)=4\sum_{p=1}^\infty a_{p,p}(u) \frac{\sin(2p^2B)}{2p^2 B} + 8 \sum_{p=2}^\infty\sum_{l=1}^{p-1} a_{p,l}(u) \frac{\sin(2plB)}{2pl B} - a_{0,0}(u).
\label{DBrealsimp}
\end{equation}
\end{lemma}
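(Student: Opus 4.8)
The plan is to unwind $A_B(u)$ into a double sum over $(p,l)$ and then exploit the symmetries of the coefficients $a_{p,l}(u)$ and $b_{p,l}$ to fold that sum onto the range $p\ge l\ge 1$. Since $a_{0,0}(u)=\sum_n|\hat u(n)|^4=\|\hat u\|_{\ell^4}^4$, the definition of $A_B$ together with \eqref{DBrewrite} gives
$$
A_B(u)=\sum_{l\ne 0}\sum_{p\ne 0}a_{p,l}(u)\,b_{p,l}-a_{0,0}(u).
$$
Before any rearrangement I would note that this double sum is absolutely convergent: by \eqref{integralest} one has $|b_{p,l}|\le 2/(1+|lp|B)$, and $|a_{p,l}(u)|\le\sum_n|\hat u(n)\,\hat u(n-l)\,\hat u(n-p)\,\hat u(n-p-l)|$, so $\sum_{l\ne0}\sum_{p\ne0}|a_{p,l}(u)|\,|b_{p,l}|\le 2\,G_B(u)<\infty$ by Lemma \ref{Tsu}(i). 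Hence all the reindexings below are legitimate.

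The second step is three symmetry identities for $a_{p,l}(u)$, each following from a change of summation index in \eqref{defapl}: (a) $a_{l,p}(u)=a_{p,l}(u)$, by reordering the two middle factors; (b) $a_{-p,-l}(u)=a_{p,l}(u)$, by the substitution $n\mapsto n-p-l$; and (c) $a_{p,-l}(u)=\overline{a_{p,l}(u)}$, by the substitution $n\mapsto n+p$ followed by comparison with the complex conjugate of \eqref{defapl}. On the other side, $b_{p,l}$ depends on $p$ and $l$ only through the product $pl$, so $b_{l,p}=b_{-p,-l}=b_{p,l}$ and $b_{p,-l}=\overline{b_{p,l}}$. Thus the summand $a_{p,l}(u)\,b_{p,l}$ is invariant under $(p,l)\mapsto(l,p)$ and under $(p,l)\mapsto(-p,-l)$, whereas $(p,l)\mapsto(p,-l)$ replaces it by its complex conjugate.

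The last step is the folding itself. Partitioning $\{p\ne0,\,l\ne0\}$ into its four sign-quadrants and using invariance under $(p,l)\mapsto(-p,-l)$ merges the two like-sign quadrants and the two opposite-sign quadrants, giving
$$
\sum_{l\ne0}\sum_{p\ne0}a_{p,l}(u)\,b_{p,l}=2\sum_{p\ge1}\sum_{l\ge1}a_{p,l}(u)\,b_{p,l}+2\sum_{p\ge1}\sum_{l\ge1}a_{p,-l}(u)\,b_{p,-l};
$$
by (c) the second sum is the conjugate of the first, so the right side is $4\Re\sum_{p\ge1}\sum_{l\ge1}a_{p,l}(u)\,b_{p,l}$. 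Separating the diagonal $l=p$ and using (a) together with $b_{l,p}=b_{p,l}$ to see that the off-diagonal part is symmetric under $p\leftrightarrow l$ turns $\sum_{p,l\ge1}a_{p,l}(u)b_{p,l}$ into $\sum_{p\ge1}a_{p,p}(u)b_{p,p}+2\sum_{p\ge1}\sum_{l=1}^{p-1}a_{p,l}(u)b_{p,l}$, which upon substitution into the first display yields \eqref{DBsimp}. Finally, when the $\hat u(n)$ are real each $a_{p,l}(u)$ is real, so $\Re(a_{p,l}(u)b_{p,l})=a_{p,l}(u)\,\Re b_{p,l}$, and from \eqref{defbpl}, $\Re b_{p,l}=\tfrac1B\int_0^B\cos(2plt)\,dt=\sin(2plB)/(2plB)$; substituting this into \eqref{DBsimp} gives \eqref{DBrealsimp}. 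The argument is essentially bookkeeping; the only points that require care are getting the index substitutions in (b) and (c) exactly right, and recording the absolute convergence so that the quadrant splitting and diagonal separation are valid.
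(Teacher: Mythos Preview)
Your argument is correct and follows the same approach as the paper, namely reducing the double sum in \eqref{DBrewrite} via the symmetries of $a_{p,l}(u)$ and $b_{p,l}$; in fact your version is more careful, since you justify the absolute convergence needed for the rearrangements and you record the symmetry $a_{p,-l}(u)=\overline{a_{p,l}(u)}$ correctly (the paper writes $a_{p,-l}(u)=a_{p,l}(u)$, which is only true when $a_{p,l}(u)$ is real, though the product $a_{p,l}b_{p,l}$ still behaves as needed).
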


\begin{proof}
It is easy to see from \eqref{defapl} and \eqref{defbpl} that for all $u \in L^2(\mathbb T)$ and  all $p$ and $l$ in $\mathbb Z$, we have
$$
a_{p,l}(u)=a_{l,p}(u)=a_{p,-l}(u)=\overline{a_{-p,l}(u)}
$$
and
$$
b_{p,l}=b_{l,p}=b_{-p,-l}=\overline{b_{-p,l}}.
$$
In view of these identities, the statements in the Lemma follow from \eqref{DBrewrite} and the fact that
$$
a_{0,0}(u)=\|\hat u\|_{\ell^4}^4. 
$$
\end{proof}

An immediate consequence is the following nonexistence result.

\begin{corollary} 
\label{nonexistence}
If $B=N\pi$ for $N \in \mathbb N$, then $J_{B,1}= N$, and there do not exist any maximizers for $J_{B,1}$ in $L^2(\mathbb T)$.
\end{corollary}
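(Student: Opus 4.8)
The plan is to deduce this immediately from Corollary \ref{ABcriterion}, once we observe that for $B$ an integer multiple of $\pi$ the oscillatory coefficients $b_{p,l}$ all vanish off the axes $p=0$, $l=0$. The first step is therefore the elementary computation: if $B=N\pi$ with $N\in\mathbb N$ and $p,l\in\mathbb Z$ satisfy $pl\neq 0$, then $pl$ is a nonzero integer, so by \eqref{defbpl},
$$
b_{p,l}=\frac1B\int_0^B e^{-2ilpt}\,dt=\frac{1}{-2ilp\,B}\left(e^{-2ilpN\pi}-1\right)=0,
$$
since $e^{-2ilpN\pi}=1$. (Equivalently, $[0,N\pi]$ spans a whole number of periods of $t\mapsto e^{-2ilpt}$.)

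The second step is to feed this into the Fourier formula for $W_B$. By \eqref{DBrewrite}, or directly from the definition \eqref{defDB}, the vanishing of every $b_{p,l}$ with $pl\neq 0$ forces $D_B(u)=0$ for all $u\in L^2(\mathbb T)$. Hence, from \eqref{WandD}, for every $u$ with $\|u\|_{L^2}=1$ (so that $\|\hat u\|_{\ell^2}^2=\frac1{2\pi}$) we have
$$
W_B(u)=4\pi B\|\hat u\|_{\ell^2}^4-2\pi B\|\hat u\|_{\ell^4}^4=N-2\pi^2 N\,\|\hat u\|_{\ell^4}^4,
$$
which is $\le N$, with equality only if $\hat u\equiv 0$ --- impossible when $\|u\|_{L^2}=1$. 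In the language of Corollary \ref{ABcriterion}, this says $A_B(u)=D_B(u)/(2\pi B)-\|\hat u\|_{\ell^4}^4=-\|\hat u\|_{\ell^4}^4<0$ for every $u\neq 0$, which is exactly the hypothesis (modulo the trivial case $u=0$) used in the proof of part (ii) of that corollary; one could also simply note that \eqref{DBsimp} collapses to $A_B(u)=-a_{0,0}(u)=-\|\hat u\|_{\ell^4}^4$ here.

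The final step is to conclude. From the display above, $W_B(u)<N=B/\pi$ whenever $\|u\|_{L^2}=1$, so $J_{B,1}\le B/\pi$; combined with $J_{B,1}\ge B/\pi$ from Theorem \ref{mainthm}(ii), this gives $J_{B,1}=N$, and since the supremum defining $J_{B,1}$ is never attained, there is no maximizer. (Alternatively one invokes Corollary \ref{ABcriterion}(ii) verbatim.) There is no real obstacle here --- this is the easy direction --- the entire content being the remark that averaging over $[0,N\pi]$ annihilates every resonant phase $e^{-2ilpt}$ with $lp\neq 0$, leaving only the "diagonal" part $4\pi B\|\hat u\|_{\ell^2}^4-2\pi B\|\hat u\|_{\ell^4}^4$ of $W_B$, whose supremum over the unit sphere is approached by Fourier-spread-out sequences and attained by none of them.
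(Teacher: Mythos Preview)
Your proof is correct and follows essentially the same route as the paper: observe that $b_{p,l}=0$ whenever $pl\neq 0$ and $B\in\pi\mathbb N$, deduce $A_B(u)=-\|\hat u\|_{\ell^4}^4<0$ for all nonzero $u$, and invoke Corollary~\ref{ABcriterion}(ii). You have simply unpacked the argument a bit more (writing out $W_B(u)=N-2\pi^2 N\|\hat u\|_{\ell^4}^4$ explicitly and noting the direct route via Theorem~\ref{mainthm}(ii)), whereas the paper cites \eqref{DBsimp} and Corollary~\ref{ABcriterion}(ii) without further comment.
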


\begin{proof}
From \eqref{defbpl} we see that if $B>0$ is an integer multiple of $\pi$, then $b_{l,p}=0$ for all integers $l$ and $p$ such that $lp \ne 0$.   Therefore, by \eqref{DBsimp},  
$$
A_B(u)=-a_{0,0}(u)=-\|\hat u\|_{\ell^4}^4 < 0
$$
for all $u \in L^2(\mathbb T)$.  The result then follows from part (ii) of Corollary \ref{ABcriterion}.
\end{proof}

To obtain existence results, we consider different test functions for $w$.   First, define $w_1 \in L^2(\mathbb T)$ by setting
$$
\widehat{w_1}(n)=\begin{cases} &1 \quad\text{for $n=0$}\\
                                                 &r \quad\text{for $n = \pm 1$}\\
                                                 &0 \quad\text{for $|n| \ge 2$},
                                                 \end{cases}
$$
where $r \in \mathbb R$.  Clearly, when $w=w_1$ we have that $a_{1,1}=r^2$ and $a_{p,l}=0$ for $(p,l) \ne (1,1)$.  Therefore, from \eqref{DBrealsimp} we get that
$$
A_B(w_1)= 4 r^2 \ \frac{\sin 2B}{2B} -(1+2r^4).
$$
Since the function $\displaystyle f(r)=\frac{1+2r^4}{4r^2}$ has a minimum value of $\frac{\sqrt 2}{2}$  at $r=2^{-\frac14}$, then there will exist a choice of $r \in \mathbb R$ for which $A_B(w_1)>0$, provided that
\begin{equation}
\frac{\sin 2B}{2B} < \frac{\sqrt 2}{2}.
\end{equation}
Thus we see that there exists $w_1 \in L^2(\mathbb R)$ for which $A_B(w_1)>0$, provided $B \in (0,B_0)$,
where  $B_0 \approx 0.6958$ is the positive solution of $(\sin 2B_0)/2B_0 = \sqrt 2/2$.

Next, define $w_2 \in L^2(\mathbb T)$ by setting
$$
\widehat{w_2}(n)=\begin{cases} &1 \quad\text{for $n=0$}\\
                                                 &r \quad\text{for $n = \pm 1$}\\
                                                 &s \quad\text{for $n= \pm 2$}\\
                                                 &0 \quad\text{for $|n| \ge 3$},
                                                 \end{cases}
$$
 where $r, s \in \mathbb R$.  Here we see that the only nonzero values of $a_{p,l}(w_2)$ which appear on the right-hand side of \eqref{DBrealsimp} when $u=w_2$ are 
 \begin{equation}
 \begin{aligned}
a_{0,0}(w_2)&=1+2r^4+2s^4\\
 a_{1,1}(w_2)&=r^2(1+2s)\\
 a_{2,1}(w_2)&=2r^2s\\
 a_{3,1}(w_2)&=r^2s^2\\
 a_{2,2}(w_2)&=s^2.
  \end{aligned}
  \end{equation}
  Therefore \eqref{DBrealsimp} gives
  $$
  A_B(w_2)=4r^2(1+2s)\left(\frac{\sin 2B}{2B}\right)+16r^2s\left(\frac{\sin 4B}{4B}\right)+
  8r^2s^2\left(\frac{\sin 6B}{6B}\right)+4s^2\left(\frac{\sin 8B}{8B}\right)-(1+2r^4+2s^4).
  $$ 
   Computations with Mathematica indicate that 
$\max\{A_B(w_2):  (r,s) \in \mathbb R^2\}$ is positive for all $B$ such that $0 < B < B_1$ where $B_1 = 0.919 \pm .001$.
    
  In fact,  if we define $w_3 \in L^2(\mathbb T)$ by setting
$$
\widehat{w_3}(n)=\begin{cases} &1 \quad\text{for $n=0$}\\
                                                 &p+iq\quad\text{for $n = \pm 1$}\\
                                                 &0 \quad\text{for $n \ge 2$},
                                                 \end{cases}
$$
then computations with Mathematica show that $\max\{A_B(w_3):  (p,q) \in \mathbb R^2\}$ is positive for all $B$ in the interval $0 < B < B_3$, where $B_3=1.39 \pm .01$.  For $B$ near $B_3$, the maximum occurs near $p=0.6$ and $q=0.5$. 

We can go a bit further by defining $w_4 \in L^2(\mathbb T)$ by
$$
\widehat{w_4}(n)=\begin{cases} &1 \quad\text{for $n=0$}\\
                                                 &p+iq \quad\text{for $n = \pm 1$}\\
                                                 &p+iq \quad\text{for $n= \pm 2$}\\
                                                 &0 \quad\text{for $|n| \ge 3$}.
                                                 \end{cases}
$$
Then computations with Mathematica show that   $\max\{A_B(w_4):  (p,q) \in \mathbb R^2\}$ is positive for all $B$ in the interval $0 < B < B_4$, where $B_4=2.60 \pm .01$.  For $B$ near $B_4$, the maximum is attained near $p=0.7$ and $q=0.6$.  

From these computations and Corollary \ref{ABcriterion} we then obtain the following existence result: 

\begin{corollary}
\label{existencerange}
There exist maximizers for $J_{B,1}$ in $L^2(\mathbb T)$ for all $B$ in the interval $0 < B < B_4$, where $B_4 = 2.60 \pm -.01$.
\end{corollary}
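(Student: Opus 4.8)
The plan is to deduce this at once from part (i) of Corollary \ref{ABcriterion}: it suffices to exhibit, for each $B$ with $0 < B < B_4$, a function $w \in L^2(\mathbb T)$ with $A_B(w) > 0$. The natural candidate is the test function $w_4$ introduced above, whose Fourier transform is supported on $\{-2,-1,0,1,2\}$ and depends on two real parameters (called $p$ and $q$ in the definition of $w_4$) to be chosen as functions of $B$; for $B$ near $B_4$ one takes $(p,q)$ close to $(0.7,0.6)$, while for $B$ near $0$ even the one-mode function $w_1$ already works.

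First I would record an explicit formula for $A_B(w_4)$. Because $\widehat{w_4}$ has finite support, only finitely many of the quantities $a_{p,l}(w_4)$ of \eqref{defapl} are nonzero (here $p$ and $l$ are the summation indices of \eqref{defapl}, not the parameters of $w_4$), and each is an elementary polynomial in $p+iq$ and $p-iq$; an expansion shows that the only degree-four contribution to $A_B(w_4)$ is the term $-a_{0,0}(w_4) = -\|\widehat{w_4}\|_{\ell^4}^4 = -\bigl(1 + 4(p^2+q^2)^2\bigr)$. Since the Fourier coefficients of $w_4$ are complex, I would use the general identity \eqref{DBsimp} rather than \eqref{DBrealsimp}; substituting the values of $a_{p,l}(w_4)$ and of $b_{p,l} = \tfrac{1}{B}\int_0^B e^{-2ilpt}\,dt$ then gives $A_B(w_4) = g(B;p,q)$ for an explicit function $g$, a polynomial in $(p,q)$ whose coefficients are elementary combinations of $B^{-1}\sin(2klB)$ and $B^{-1}(\cos(2klB)-1)$ for small integers $k,l$.

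Next I would observe that, for fixed $B$, $g(B;p,q) \to -\infty$ as $p^2+q^2 \to \infty$ — only $-a_{0,0}(w_4)$ has degree four, with the strictly negative leading part $-4(p^2+q^2)^2$ — so the supremum $m(B) := \sup_{(p,q)\in\mathbb R^2} g(B;p,q)$ is attained. Moreover $g$ is jointly continuous in $(B,p,q)$, so if $g(B_0;p_0,q_0) > 0$ then $g(B;p_0,q_0) > 0$ for all $B$ near $B_0$; hence $\{B>0:m(B)>0\}$ is open. The content of the corollary is that this open set contains $(0,B_4)$, which is what the Mathematica computation described above establishes, by maximizing $g(B;\cdot,\cdot)$ numerically over a fine mesh of $B$-values in $(0,B_4)$. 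Granting this, Corollary \ref{ABcriterion}(i) yields $J_{B,1} > B/\pi$, and hence the existence of a maximizer for $J_{B,1}$, for every such $B$.

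The main obstacle is precisely this last point: the positivity of $m(B)$ throughout the whole interval $(0,B_4)$ is supported only by numerical global optimization. A fully rigorous argument would have to upgrade that computation — for instance by certified interval-arithmetic optimization of $g(B;\cdot,\cdot)$, or by combining an explicit modulus-of-continuity estimate for $m(B)$ with admissible choices of $(p,q)$ at finitely many sampled values of $B$ — so as to cover the entire interval rather than just the tested mesh. The algebra producing the formula for $g$ is long but routine; it is the uniform-in-$B$ positivity that is the delicate step.
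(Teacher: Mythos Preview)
Your approach is essentially the paper's own: both simply invoke the Mathematica computation showing that $\max_{(p,q)} A_B(w_4) > 0$ for $0 < B < B_4$, and then apply Corollary~\ref{ABcriterion}(i). The paper offers no additional rigor at the numerical step, and your closing paragraph correctly identifies this as the real gap.

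One slip worth flagging: your claim that the only degree-four contribution to $A_B(w_4)$ is $-a_{0,0}(w_4)$ is false. Using the summation indices of \eqref{defapl}, one finds
\[
a_{3,1}(w_4)=\hat w_4(2)\,\overline{\hat w_4(1)}\;\overline{\hat w_4(-1)}\,\hat w_4(-2)=(p^2+q^2)^2,
\]
which is also of degree four; via \eqref{DBsimp} it contributes $8(p^2+q^2)^2\,\Re(b_{3,1})=\dfrac{4\sin(6B)}{3B}(p^2+q^2)^2$ to $A_B(w_4)$. Thus the leading part of $g(B;p,q)$ is $\bigl(\tfrac{4\sin(6B)}{3B}-4\bigr)(p^2+q^2)^2$, and for small $B$ (roughly $B\lesssim 0.32$, where $\sin(6B)/(6B)>1/2$) this coefficient is positive, so $g(B;\cdot,\cdot)\to +\infty$ and $m(B)$ is not attained. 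This does no damage to the argument---you only need $m(B)>0$, not that it be achieved, and in that range the positivity is trivially true---but the intermediate assertion should be corrected.
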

  
\section{Stability of sets of ground-state solutions of the periodic DMNLS equation}
\label{sec:stability}

As mentioned in the introduction, the periodic DMNLS equation for functions of period $L$ in $x$ takes the form
\begin{equation}
u_t = -i \nabla H_L(u)
\label{DMNLSperL}
\end{equation}
   where  
$$
H_L(u)=-\frac{2\pi}{L}\int_0^L\int_0^1 |T^L_t u(x)|^4\ dt \ dx.
$$  
The operator $T^L_t$ is defined as a Fourier multiplier operator on $L^2_{\rm per}(0,L)$ by setting
$$
\mathcal F_L(T^L_t u)[n]=e^{-i(2\pi n/L)^2 t} \mathcal F_L u[n]
$$
 for all $n \in \mathbb Z$, where $\mathcal F_L$ denotes the Fourier transform on $L^2_{\rm per}(0,L)$ (see Section \ref{sec:notation} for notation).
 In particular, from \eqref{defT} we see that $T_t=T^{2\pi}_t$. 
 
 Equation \eqref{DMNLSperL} is globally well-posed in $L^2_{\rm per}(0,L)$, in the sense that for every $u_0(x) \in L^2_{\rm per}(0,L)$, there is a unique strong solution $u(x,t)$ of  \eqref{DMNLSperL} in $L^2_{\rm per}(0,L)$  with $u(x,0) = u(x)$.   Moreover, $H_L(u)$ and $P(u):=\frac12 \int_0^L|u|^2\ dx$ are conserved quantities for such solutions.  (See \cite{adekoya} for details.)
 
 A solution of \eqref{DMNLSperL} of the form 
 \begin{equation}u(x,t)=e^{i \omega t}\phi(x),
 \label{BS}
 \end{equation}
 where $\phi \in L^2_{\rm per}(0,L)$,
  is called a bound-state solution with profile function $\phi$.   Substituting into \eqref{DMNLSperL}, we see that $\phi \in L^2_{\rm per}(0,L)$ is the profile function of a bound-state solution if and only if $\phi$ is satisfies the equation
  \begin{equation}
 \nabla H_L(\phi) = \omega \phi
 \label{EL}
 \end{equation}
 for some $\omega \in \mathbb R$.
 
Note that \eqref{EL} is the Euler-Lagrange equation for the variational problem of minimizing $H_L(u)$ subject to the constraint that $P(u)$ be held constant, with $\omega$ playing the role of the Lagrange multiplier.  Thus profile functions for bound-state solutions may be characterized as critical points of the variational problem.   If a non-zero bound-state profile $\phi$ is actually a minimizer for the variational problem, then we say that the bound-state solution is a ground-state solution. That is, a bound-state solution \eqref{BS} is a ground-state solution if $H(\phi) \le H(\psi)$ for all $\psi \in L^2_{\rm per}(0,L)$ such that $P(\psi)=P(\phi)>0$.
 
 For given $\lambda > 0$ and $L>0$, we define $S_{L,\lambda}$ to be the set of all minimizers for $H_L(u)$ subject to the constraint $P(u)=\lambda$.  (Note that it may happen that no such minimizers exist, in which case $S_{L,\lambda}$ is empty.) Thus, every element of $S_{L,\lambda}$ is a ground-state solution profile; and conversely every ground-state profile belongs to $S_{L,\lambda}$ for some $\lambda > 0$.  Because $H_L(u)$ and $P(u)$ are invariant under translations and under the action of multiplication by $e^{i\theta}$ for $\theta \in \mathbb R$, then $S_{L,\lambda}$ is also invariant under these operations.   That is, if $\phi \in S_{L,\lambda}$, then $e^{i\theta}\phi(x+x_0)$ is also in $S_{L,\lambda}$ for every $\theta \in \mathbb R$ and every $x_0 \in \mathbb R$.  Another way of putting this fact is that $S_{L,\lambda}$ is invariant under translations both in $x$ and in Fourier space.

    We say that a sequence $\{u_n\}$ in $L^2_{\rm per}(0,L)$ is a minimizing sequence for $S_{L,\lambda}$ if $P(u_n)=\lambda$ for all $n \in \mathbb N$, and $H_L(u_n) \to I_{L,\lambda}$ as $n \to \infty$, where 
 $$
 I_{L,\lambda}=\inf\left\{H_L(u): u \in L^2_{\rm per}(0,L) \quad \text{and $P(u)=\lambda$}\right\}.
 $$
 
 We observe next that profiles in $S_{L,\lambda}$ are related via dilations to the maximizers for \eqref{Strichinequal} discussed in the preceding sections.  
  For $\delta > 0$, define a dilation operator $M_\delta$ on functions $u$ with domain $\mathbb R$,  by setting
 $$
(M_\delta u)(x)= u(\delta x)
 $$ 
 for $x \in \mathbb R$.

\begin{lemma}
\label{dilation} Suppose $L>0$ and $\lambda >0$ are given, and let $\delta = L/(2\pi)$ and $B=(2\pi/L)^2$.   Then $\psi \in L^2_{\rm per}(0,L)$ is in $S_{L,\lambda}$ if and only if $M_\delta(\psi) \in L^2(\mathbb T)$ is a maximizer for $J_{B,\lambda/\delta}$.
\end{lemma}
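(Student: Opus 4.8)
The plan is to observe that $M_\delta$ is nothing but a simultaneous rescaling of the space variable by $\delta$ and of the time variable by $B$, under which $H_L$ becomes a fixed negative multiple of $W_B$ and the constraint $P(u)=\lambda$ turns into an $L^2$-norm constraint; the asserted equivalence of the two variational problems then follows at once.

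First I would record how $M_\delta$ acts on Fourier coefficients. Writing $u\in L^2_{\rm per}(0,L)$ as $u(x)=\sum_n \mathcal F_L u[n]\,e^{i(2\pi n/L)x}$ and using $(2\pi/L)\delta=1$, one gets $M_\delta u(x)=u(\delta x)=\sum_n \mathcal F_L u[n]\,e^{inx}$, so that $\widehat{M_\delta u}(n)=\mathcal F_L u[n]$ for all $n\in\mathbb Z$. In particular $M_\delta$ is a bijection of $L^2_{\rm per}(0,L)$ onto $L^2(\mathbb T)$ with inverse $M_{1/\delta}$, and comparing Parseval's theorem on the two sides (recall $\|u\|_{L^2(0,L)}^2=L\sum_n|\mathcal F_L u[n]|^2$, while $\|M_\delta u\|_{L^2(\mathbb T)}^2=2\pi\sum_n|\mathcal F_L u[n]|^2$) one checks that $\|M_\delta u\|_{L^2(\mathbb T)}^2$ is a fixed positive multiple of $\|u\|_{L^2(0,L)}^2$. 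Consequently $M_\delta$ carries the constraint set $\{u\in L^2_{\rm per}(0,L):P(u)=\lambda\}$ bijectively onto the $L^2$-sphere $\{v\in L^2(\mathbb T):\|v\|_{L^2}^2=\lambda/\delta\}$ appearing in the definition of $J_{B,\lambda/\delta}$.

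Next I would rescale the time variable. Since $B=(2\pi/L)^2$, the Fourier multipliers $e^{-i(2\pi n/L)^2t}$ and $e^{-in^2s}$ coincide once $s=Bt$, which yields the pointwise identity $T^L_t u = M_{1/\delta}\bigl(T_{Bt}(M_\delta u)\bigr)$. Substituting this into the definition of $H_L$ and changing variables by $y=x/\delta$ in the space integral (which introduces a Jacobian $\delta$ and turns $\int_0^L$ into $\int_{\mathbb T}$) and $s=Bt$ in the time integral (which introduces $1/B$ and turns $\int_0^1$ into $\int_0^B$), one obtains
\begin{equation}
H_L(u)=-\frac{2\pi}{L}\cdot\frac{\delta}{B}\,W_B(M_\delta u)=-\frac1B\,W_B(M_\delta u),
\label{plan-HWrelation}
\end{equation}
using $(2\pi/L)\delta=1$ in the last step; all integrals involved are finite by Lemma \ref{Tsu}.

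Combining these two observations finishes the argument: since $M_\delta$ is a constraint-set-preserving bijection and $H_L=-B^{-1}W_B\circ M_\delta$ with $B>0$, minimizing $H_L$ over $\{P(u)=\lambda\}$ is the same as maximizing $W_B$ over $\{\|v\|_{L^2}^2=\lambda/\delta\}$; equivalently $\inf\{H_L(\chi):P(\chi)=\lambda\}=-B^{-1}J_{B,\lambda/\delta}$, and $\psi$ lies in $S_{L,\lambda}$ exactly when $M_\delta\psi$ attains $J_{B,\lambda/\delta}$, i.e. is a maximizer for $J_{B,\lambda/\delta}$. \textbf{The main obstacle} is not conceptual but purely a matter of bookkeeping: one must perform the space and time rescalings simultaneously and track the Jacobians, the factor $2\pi/L$ in front of $H_L$, and the differing Parseval normalizations on $L^2(\mathbb T)$ and $L^2_{\rm per}(0,L)$ carefully enough that the powers of $\delta$ and $B$ cancel to give exactly \eqref{plan-HWrelation} together with the stated relation between $\lambda$ and the radius of the $L^2$-sphere.
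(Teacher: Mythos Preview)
Your proposal is correct and follows essentially the same route as the paper: the paper's proof records the same constraint correspondence $P(u)=\lambda \Leftrightarrow \|M_\delta u\|_{L^2}^2=\lambda/\delta$, derives the equivalent intertwining relation $T_t(M_\delta u)=M_\delta(T^L_{\delta^2 t}u)$ (your identity rewritten with $s=Bt$ and inverted), and concludes $H_L(u)=-B^{-1}W_B(M_\delta u)$ before passing to the infimum. Your version simply spells out the Fourier-side computation and the Jacobian bookkeeping that the paper leaves as ``a calculation.''
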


\begin{proof}   We have that  $u \in L^2_{\rm per}(0,L)$ with $P(u)=\lambda$ if and only if  $v=M_\delta u \in L^2(\mathbb T)$ with $\|v\|_{L^2}^2= \lambda/\delta$.  A calculation shows that
 $$
 T_t(v)(x)=M_\delta\left(T^L_{\delta^2 t}(u)\right),
 $$
whence one obtains that
 $$
 H_L(u)=-\frac{1}{B}W_B(v).
 $$  
Taking the infimum over all $u \in L^2_{\rm per}(0,L)$ with $P(u)=\lambda$, or equivalently over all $v \in L^2(\mathbb T)$ with $\|v\|^2_{L^2}= \lambda/\delta$, we obtain the desired result.
\end{proof}
  
 \bigskip
 
 \begin{thm} Suppose $L > 2\pi/\sqrt{B_4}$, where $B_4$ is as defined in Corollary \ref{existencerange}. 
  Then for every $\lambda > 0$, $S_{L,\lambda}$ is nonempty, and is furthermore stable, in the following sense.   For $u \in L^2_{\rm per}(0,L)$, define
 $$
 d(u, S_{L,\lambda})= \inf_{\phi\in S_{L,\lambda}}\|u-\phi\|_{L^2_{\rm per}(0,L)}.
 $$
 Then for every $\epsilon > 0$, there exists $\delta > 0$ such that if $u_0 \in L^2_{\rm per}(0,L)$  with $d(u_0,S_{L,\lambda})<\delta$,  the solution $u(x,t)$ of \eqref{DMNLSperL} with initial data $u(\cdot,0)=u_0$ will satisfy $d(u(\cdot,t), S_{L,\lambda})< \epsilon$ for all $t \ge 0$.
 \label{stabilitythm}
 \end{thm}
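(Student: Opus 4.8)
The plan is to deduce the theorem from Theorem~\ref{mainthm} and Lemma~\ref{dilation} by the now-standard variational argument for orbital stability. Throughout, write $\delta = L/(2\pi)$ and $B = (2\pi/L)^2$, so that the hypothesis $L > 2\pi/\sqrt{B_4}$ is exactly the condition $0 < B < B_4$, and recall from Lemma~\ref{dilation} that $H_L(u) = -B^{-1}W_B(M_\delta u)$ and that $M_\delta$ is, up to a constant factor, an $L^2$-isometry from $L^2_{\rm per}(0,L)$ onto $L^2(\mathbb T)$ carrying the constraint $\{P=\lambda\}$ onto $\{\,\|\cdot\|_{L^2}^2 = \lambda/\delta\,\}$. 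For non-emptiness: by the test-function computations of Section~\ref{sec:existence} together with Corollary~\ref{ABcriterion}, the inequality $B < B_4$ forces $J_{B,1} > B/\pi$, so by Theorem~\ref{mainthm}(i) and the scaling identity $J_{B,\mu} = \mu^2 J_{B,1}$ there is a maximizer for $J_{B,\lambda/\delta}$ for every $\lambda>0$; dilating it back via $M_\delta^{-1}$ and invoking Lemma~\ref{dilation} produces an element of $S_{L,\lambda}$.

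The heart of the argument is the following compactness property, which I will call $(\star)$: every minimizing sequence $\{u_n\}$ for $I_{L,\lambda}$ has a subsequence which, after multiplication by characters $e^{i(2\pi n_k/L)x}$ with $n_k \in \mathbb Z$, converges strongly in $L^2_{\rm per}(0,L)$ to an element of $S_{L,\lambda}$. To prove $(\star)$, I would push the sequence through the dilation: $\{M_\delta u_n\}$ is then a maximizing sequence for $J_{B,\lambda/\delta}$, and after rescaling the $L^2$-norm it becomes a maximizing sequence for $J_{B,1}$, to which Theorem~\ref{mainthm}(i) applies (using $J_{B,1} > B/\pi$). That theorem furnishes a subsequence which, after multiplication by characters $e^{in_kx}$ with $n_k \in \mathbb Z$, converges strongly in $L^2(\mathbb T)$ to a maximizer of $J_{B,1}$. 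Undoing the rescaling and the dilation, and observing that under $M_\delta^{-1}$ the character $e^{in_kx}$ becomes precisely $e^{i(2\pi n_k/L)x}$, one obtains strong $L^2_{\rm per}(0,L)$-convergence of the corresponding subsequence of $\{u_n\}$, modulo such characters, to a maximizer of $J_{B,\lambda/\delta}$ dilated back, i.e.\ to an element of $S_{L,\lambda}$. Since $S_{L,\lambda}$ is invariant under translation in Fourier space, the distance $d(\cdot, S_{L,\lambda})$ is unchanged by those characters, so in fact $d(u_n, S_{L,\lambda}) \to 0$ along that subsequence.

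Given $(\star)$, I would finish by the classical argument. Equation~\eqref{DMNLSperL} is globally well-posed in $L^2_{\rm per}(0,L)$ with $P$ and $H_L$ conserved; moreover $H_L$ is continuous (by Lemma~\ref{WBcont} and Lemma~\ref{dilation}) and bounded below on each set $\{P=\lambda\}$ (by the Strichartz bound \eqref{Strich}), so $I_{L,\lambda}$ is finite and every $\phi \in S_{L,\lambda}$ satisfies $P(\phi)=\lambda$, $H_L(\phi) = I_{L,\lambda}$. Suppose $S_{L,\lambda}$ were not stable: then there are $\epsilon_0 > 0$, initial data $u_0^n$ with $d(u_0^n, S_{L,\lambda}) \to 0$, and times $t_n \ge 0$ with $d(u^n(\cdot,t_n),S_{L,\lambda}) \ge \epsilon_0$, where $u^n$ solves \eqref{DMNLSperL} with $u^n(\cdot,0)=u_0^n$. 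Put $w_n = u^n(\cdot,t_n)$. By conservation and the continuity of $P$ and $H_L$ near $S_{L,\lambda}$, $P(w_n)=P(u_0^n) \to \lambda$ and $H_L(w_n)=H_L(u_0^n) \to I_{L,\lambda}$. Rescaling to $\tilde w_n = (\lambda/P(w_n))^{1/2} w_n$ gives $P(\tilde w_n)=\lambda$; since $\|w_n - \tilde w_n\|_{L^2_{\rm per}} \to 0$ and $H_L$ is continuous, $H_L(\tilde w_n) \to I_{L,\lambda}$, so $\{\tilde w_n\}$ is a minimizing sequence for $I_{L,\lambda}$. By $(\star)$, $d(\tilde w_n, S_{L,\lambda}) \to 0$ along a subsequence, whence $d(w_n,S_{L,\lambda}) \le \|w_n - \tilde w_n\|_{L^2_{\rm per}} + d(\tilde w_n, S_{L,\lambda}) \to 0$ along that subsequence, contradicting $d(w_n,S_{L,\lambda}) \ge \epsilon_0$.

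The main obstacle is concentrated in $(\star)$: transporting Theorem~\ref{mainthm}(i) through the dilation $M_\delta$ and the $L^2$-rescaling while respecting the symmetry group (translations in $x$ and in Fourier space), so that "convergence modulo symmetries" in the torus model becomes genuine convergence to the full symmetric set $S_{L,\lambda}$. The one delicate point is that the Fourier-space translations produced by Theorem~\ref{mainthm}(i) are by integers (the integers $m_j$ appearing in its proof), so that under $M_\delta^{-1}$ they become exactly the characters $e^{i(2\pi n/L)x}$ that preserve $L^2_{\rm per}(0,L)$ and $S_{L,\lambda}$ — which is precisely what makes $d(\cdot,S_{L,\lambda})$ invariant under them and lets the contradiction close. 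Everything else (the conservation laws, continuity of $H_L$, and global well-posedness) is quoted from \cite{adekoya} and the preceding sections.
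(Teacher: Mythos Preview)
Your proposal is correct and follows essentially the same route as the paper: non-emptiness via Lemma~\ref{dilation} and Corollary~\ref{existencerange}, the compactness property $(\star)$ for minimizing sequences transported from Theorem~\ref{mainthm}(i) through the dilation, and then the standard Cazenave--Lions contradiction argument using conservation of $P$ and $H_L$. Your rescaling of $w_n = u^n(\cdot,t_n)$ by $(\lambda/P(w_n))^{1/2}$ is exactly the paper's rescaling by $\alpha_n$ (since $P(w_n)=P(u_0^n)$ by conservation), and your explicit remark that the integer Fourier shifts from Theorem~\ref{mainthm}(i) become the admissible characters $e^{i(2\pi n_k/L)x}$ under $M_\delta^{-1}$ makes precise a step the paper leaves implicit.
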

 
 \begin{proof} Notice that if $L > 2\pi/\sqrt{B_4}$, then $B=(2\pi/L)^2$ satisfies $0 < B < B_4$.   Also, as noted above after equation \eqref{Strich},  the existence of a maximizing function for $J_{B,1}$ is equivalent to the existence of a maximizing function for $J_{B,\lambda}$  for every $\lambda > 0$.   Therefore it follows immediately from  Lemma \ref{dilation} and Corollary \ref{existencerange} that $S_{L,\lambda}$ is nonempty.  Furthermore, from Theorem \ref{mainthm} and Lemma \ref{dilation} it also follows that for every minimizing sequence for $S_{L,\lambda}$, one can find a subsequence which, after translations in Fourier space, converges in $L^2_{\rm per}(0,L)$ to a function in $S_{L,\lambda}$.
 
 The stability of the set $S_{L,\lambda}$ follows from a standard argument,  which we summarize here (more details, for example, can be found in \cite{adekoya}).  Suppose, to the contrary, that the set $S_{L,\lambda}$ is not stable.   Then one must be able to find some $\epsilon_0 > 0$, some sequence of initial data $\{u_{0n}\}$ in $L^2_{\rm per}(0,L)$ with corresponding solutions $\{u_n(x,t)\}$, and some sequence of times $\{t_n\}$ in $(0,\infty)$ such that $d(u_{0n},S_{L,\lambda}) \to 0$ as $n \to \infty$ and $d(u_n(\cdot,t_n),S_{L,\lambda}) \ge \epsilon_0$ for all $n \in \mathbb N$.  The assumption on the initial data $\{u_{0n}\}$ implies that by choosing a sequence $\{\alpha_n\}$ in $(0,\infty)$ with $\lim_{n \to \infty} \alpha_n = 1$ such that $P(\alpha_n u_{0n})=\lambda$ for all sufficiently large $n$, we can obtain a minimizing sequence $\{\alpha_n u_{0n}\}$  for $S_{L,\lambda}$.  Moreover, since $H_L$ and $P$ are conserved functionals for \eqref{DMNLSperL}, $\{\alpha_n u_n(\cdot,t_n)\}$ is also a minimizing sequence for $S_{L,\lambda}$.   Therefore there exists a subsequence of  $\{\alpha_n u_n(\cdot,t_n)\}$  which, after translations in Fourier space, converges in $L^2_{\rm per}(0,L)$ to a function in $S_{L,\lambda}$. Since $S_{L,\lambda}$ is invariant under the action of translation in Fourier space, it follows that $d(\alpha_n u_n(\cdot, t_n),S_{L,\lambda})$, and hence also $d(u_n(\cdot,t_n),S_{L,\lambda})$, converges to zero as $n \to \infty$. But this contradicts the assertion that $d(u_n(\cdot,t_n),S_{L,\lambda}) \ge \epsilon_0$ for all $n \in \mathbb N$. 

 \end{proof}
 
 We remark that similar results on the stability of sets of ground-state solutions of the nonlinear Schr\"odinger equation $iu_t + u_{xx} + |u|^p u_x = 0$ date back to the work of Cazenave and Lions in \cite{cazenavelions}. In fact, for the nonlinear Schr\"odinger equation, Cazenave and Lions prove a stronger form of stability called {\it orbital stability}:  namely, they show for a given ground-state profile, the two-dimensional set $\{e^{i\theta}\phi(x+x_0): \theta \in \mathbb R, x_0 \in \mathbb R\}$ is stable in the above sense.  (Note that the term ``orbital stability'' is slightly inaccurate here, in that the orbit in the usual sense of the ground-state solution would be the one-dimensional set $\{e^{i\theta}\phi(x): \theta \in \mathbb R\}$. It is easy to see, however, that this one-dimensional set is not stable in the above sense, cf.\ Remark 8.3.3 on p.\ 274 of \cite{cazenave}.)  In order to prove this stronger form of stability, one generally needs more information on the structure of the set of minimizers of the variational problem.  In the case of the nonlinear Schr\"odinger equation, it follows from the elementary theory of ordinary differential equations that the ground-state profile for a given $L^2$ norm is unique up to translations and multiplications by phase shifts $e^{i\theta}$, which allows one to deduce orbital stability.  However, no such uniqueness result is available yet for the DMNLS equation.
 
In light of the fact that ground-state solutions for the nonlinear Schrodinger equation have, up to symmetries, profiles that are real-valued even functions of $x$, it is interesting to note that at least for some values of $B$,  ground-state solutions of the periodic DMNLS equation cannot have real-valued even profiles:

\begin{corollary}
In the case $L=\sqrt{8\pi}$, the set $S_{L,\lambda}$ of ground-state profiles is nonempty for every $\lambda > 0$.  However, none of the the functions in $S_{L,\lambda}$ are real-valued and even.
\end{corollary}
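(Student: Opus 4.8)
\emph{Proof proposal.} The plan is to transport the assertion to the Strichartz maximization problem on $\mathbb{T}$ by means of Lemma~\ref{dilation}. With $L = \sqrt{8\pi}$ one computes $\delta := L/(2\pi) = \sqrt{2/\pi}$ and $B := (2\pi/L)^2 = \pi/2$. Since $0 < \pi/2 < B_4$, the computation performed in Section~\ref{sec:existence} (with the test function $w_4$) yields a $w \in L^2(\mathbb{T})$ with $A_{\pi/2}(w) > 0$, so Corollary~\ref{ABcriterion}(i) gives both $J_{\pi/2,1} > (\pi/2)/\pi = \tfrac12$ and the existence of a maximizer for $J_{\pi/2,1}$. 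Combining this with the homogeneity relation $J_{B,\lambda} = \lambda^2 J_{B,1}$ (so that maximizers exist for every prescribed value of the $L^2$ norm) and with Lemma~\ref{dilation}, I conclude that $S_{L,\lambda}$ is nonempty for every $\lambda > 0$, which is the first assertion.

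For the second assertion I would argue by contradiction. Both the dilation $M_\delta$ and multiplication by a positive real scalar preserve the property of being real-valued and even, so by Lemma~\ref{dilation} it is enough to rule out a real-valued, even maximizer for $J_{\pi/2,1}$. Suppose $u_0$ is such a maximizer. A real-valued even function on $\mathbb{T}$ has all of its Fourier coefficients real, so $\widehat{u_0}(n) \in \mathbb{R}$ for every $n \in \mathbb{Z}$, and hence every $a_{p,l}(u_0)$ from \eqref{defapl} is real.

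The key computation is the value of $b_{p,l}$ at $B = \pi/2$: for $pl \ne 0$,
\[
b_{p,l} = \frac{1}{B}\int_0^B e^{-2ilpt}\,dt = \frac{1 - e^{-2ilpB}}{2i\,lp\,B} = \frac{1 - (-1)^{pl}}{i\pi\,pl},
\]
which is purely imaginary (and equals $0$ whenever $pl$ is even); in particular $\Re\, b_{p,l} = 0$ for every $p,l$ with $pl \ne 0$. Feeding this into the formula \eqref{DBsimp} for $A_{\pi/2}(u_0)$, each product $a_{p,l}(u_0)\, b_{p,l}$ inside the real part is a real number times a purely imaginary one and therefore contributes nothing, leaving $A_{\pi/2}(u_0) = -a_{0,0}(u_0) = -\|\widehat{u_0}\|_{\ell^4}^4 < 0$ (strict, since $\|u_0\|_{L^2} = 1$ forces $\widehat{u_0} \ne 0$). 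But combining the definition of $A_B$ with \eqref{WandD} gives, for $\|u_0\|_{L^2} = 1$, the identity $W_{\pi/2}(u_0) = \tfrac12 + \pi^2 A_{\pi/2}(u_0)$; since $W_{\pi/2}(u_0) = J_{\pi/2,1} > \tfrac12$, this forces $A_{\pi/2}(u_0) > 0$ --- a contradiction, which finishes the proof.

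The routine portion of the argument is the bookkeeping in the reduction (checking that real-evenness really does survive $M_\delta$ and the renormalization of the $L^2$ norm), and I expect the one genuinely essential ingredient to be the observation that at $B = \pi/2$ every $b_{p,l}$ with $pl \ne 0$ is purely imaginary --- which is precisely what makes the real part in \eqref{DBsimp} collapse to $-\|\widehat{u_0}\|_{\ell^4}^4$ on functions with real Fourier coefficients.
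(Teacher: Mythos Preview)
Your proposal is correct and follows essentially the same route as the paper's proof: reduce via Lemma~\ref{dilation} to maximizers of $J_{\pi/2,1}$, observe that at $B=\pi/2$ the off-diagonal contributions in the formula for $A_B$ vanish for functions with real Fourier coefficients (the paper phrases this via \eqref{DBrealsimp} and $\sin(pl\pi)=0$, you via \eqref{DBsimp} and $\Re\,b_{p,l}=0$, which is the same computation), and then derive a contradiction from the fact that any maximizer must satisfy $A_{\pi/2}(u_0)>0$. The only cosmetic difference is that the paper cites Corollary~\ref{ABcriterion} for this last implication, whereas you rederive it explicitly from \eqref{WandD}.
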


\begin{proof}
The assertion that $S_{\sqrt{8\pi},\lambda}$ is nonempty follows from  Corollary \ref{existencerange} and Lemma \ref{dilation}.

Suppose now that $\psi \in S_{\sqrt{8\pi},\lambda}$.  Then from Lemma \ref{dilation} we have that $v=M_{\delta} \psi \in L^2(\mathbb T)$ is a maximizer for $J_{B,\lambda/\delta}$, where $B=\pi/2$ and $\delta =\sqrt{2/\pi}$.   From \eqref{DBrealsimp} we see that in the case $B=\pi/2$,  for every function $u \in L^2(\mathbb T)$ such that $\hat u(n)$ is real-valued for all $n \in \mathbb N$, we have $A_B(u)=-a_{0,0}(u)<0$.    On the other hand, from Corollary \ref{ABcriterion} and its proof one sees that for a maximizer $v$ for $J_{B,\lambda/\delta}$, one necessarily has $A_B(v) > 0$.  Therefore the Fourier coefficients of $v$ cannot be real-valued.  Since real-valued even functions must have real-valued Fourier coefficients, it follows that $v$ cannot be real-valued and even.  Therefore $\psi$ cannot be real-valued and even either.
 \end{proof}

We conclude with an easy nonexistence result, which shows in particular that Theorem \ref{stabilitythm} cannot be extended to all positive values of $L$.

\begin{thm} 
If $L= 2\sqrt{\pi/N}$ for some $N \in \mathbb N$, then $S_{L,\lambda}$ is empty for every $\lambda > 0$.  Hence, for these values of $L$, the periodic DMNLS equation \eqref{DMNLSperL} has no ground-state solutions.
\label{nogroundstate}
\end{thm}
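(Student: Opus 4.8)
The plan is to reduce the statement immediately to Corollary \ref{nonexistence} by means of the dilation correspondence established in Lemma \ref{dilation}. The first step is a computation of the relevant parameters: in Lemma \ref{dilation} one sets $\delta = L/(2\pi)$ and $B = (2\pi/L)^2$. When $L = 2\sqrt{\pi/N}$ we have $L^2 = 4\pi/N$, so that
$$
B = \frac{(2\pi)^2}{L^2} = \frac{4\pi^2}{4\pi/N} = N\pi.
$$
Thus $B$ is an integer multiple of $\pi$, which puts us precisely in the setting of Corollary \ref{nonexistence}.

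Next I would invoke Corollary \ref{nonexistence}, which gives that for $B = N\pi$ there is no maximizer for $J_{B,1}$ in $L^2(\mathbb T)$. As noted just after \eqref{Strich}, the existence of a maximizer for $J_{B,1}$ is equivalent to the existence of a maximizer for $J_{B,\mu}$ for every $\mu > 0$ (this is the $\lambda$-scaling $J_{B,\mu} = \mu^2 J_{B,1}$, which sends maximizers to maximizers). Hence $J_{B,\mu}$ has no maximizer for any $\mu > 0$; in particular it has none for $\mu = \lambda/\delta$.

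Then, applying Lemma \ref{dilation} with these values of $\delta$ and $B$, one has that $\psi \in S_{L,\lambda}$ if and only if $M_\delta(\psi) \in L^2(\mathbb T)$ is a maximizer for $J_{B,\lambda/\delta}$. Since the set of such maximizers is empty, $S_{L,\lambda}$ is empty for every $\lambda > 0$. For the final assertion, recall from Section \ref{sec:stability} that every ground-state profile of \eqref{DMNLSperL} belongs to $S_{L,\lambda}$ for some $\lambda > 0$; since all of these sets are empty, equation \eqref{DMNLSperL} has no ground-state solutions for these values of $L$.

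This argument is essentially bookkeeping, and I do not anticipate a genuine obstacle; the only point requiring care is keeping the two scaling parameters straight — the spatial dilation factor $\delta = L/(2\pi)$ (which rescales both the domain and the $L^2$ norm) versus the time-window parameter $B = (2\pi/L)^2$ — and making sure that the nonexistence statement for $J_{B,1}$ from Corollary \ref{nonexistence} is correctly transported to $J_{B,\lambda/\delta}$ via the $\lambda$-homogeneity before Lemma \ref{dilation} is applied.
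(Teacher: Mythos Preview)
Your proposal is correct and follows essentially the same route as the paper's proof: compute $B=(2\pi/L)^2=N\pi$, invoke Corollary~\ref{nonexistence} to rule out maximizers for $J_{N\pi,1}$ (hence for $J_{N\pi,\lambda/\delta}$ by homogeneity), and then apply Lemma~\ref{dilation} to conclude that $S_{L,\lambda}$ is empty. Your write-up is in fact slightly more detailed than the paper's, particularly in spelling out the parameter computation and the final step about ground-state profiles.
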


\begin{proof} Suppose $L = 2\sqrt{\pi/N}$ for some $N \in \mathbb N$, and $\lambda > 0$.  From Lemma \ref{dilation}, we see that  a function $\psi \in L^2_{\rm per}(0,L)$ can be in $S_{L,\lambda}$ only if $M_\delta(\psi) \in L^2(\mathbb T)$ is a maximizer for $J_{2\pi,\lambda/\delta}$, where $\delta = 1/\sqrt{N\pi}$. But from Corollary \ref{nonexistence} we know that $J_{N\pi,1}$, and hence also $J_{N\pi,\lambda/\delta}$, can have no maximizers. Therefore $S_{L,\lambda}$ must be empty. 
\end{proof}

\end{document}